\numberwithin{equation}{section}
\theoremstyle{plain}
\newtheorem{theorem}{Theorem}[section]
\newtheorem{lemma}[theorem]{Lemma}
\theoremstyle{definition}
\newtheorem{definition}{Definition}[section]
\newtheorem{assumption}{Assumption}[section]
\theoremstyle{remark}
\newtheorem{remark}{Remark}[section]
\newcommand{\norm}[1]{\left\|#1\right\|}
\newcommand{\abs}[1]{\left\vert#1\right\vert}
\newcommand{\spr}[1]{\left\langle\,#1\,\right\rangle}
\newcommand{\kl}[1]{\left(#1\right)}
\newcommand{\Kl}[1]{\left\{#1\right\}}
\newcommand{\supp}[1]{\text{supp}\left(#1\right)}
\definecolor{aog}{rgb}{0.0, 0.5, 0.0}
\newcommand{\LandauO}{\mathcal{O}}
\newcommand{\R}{\mathbb{R}} 
\newcommand{\C}{\mathbb{C}}
\newcommand{\N}{\mathbb{N}}
\newcommand{\Z}{\mathbb{Z}}
\newcommand{\xD}{x^\dagger}
\newcommand{\xad}{x_\alpha^\delta}
\newcommand{\yd}{y^{\delta}}
\newcommand{\za}{z_\alpha}
\newcommand{\zad}{z_\alpha^\delta}
\newcommand{\eps}{\varepsilon}
\newcommand{\vphi}{\varphi}
\newcommand{\et}{\tilde{e}}
\newcommand{\lt}{{\ell_2}}
\newcommand{\AD}{\mathcal{A}}
\newcommand{\Ft}{\tilde{F}}
\newcommand{\fkt}{\tilde{f}_k}
\newcommand{\ekt}{\tilde{e}_k}
\newcommand{\ak}{\alpha_k}
\newcommand{\OD}{\Omega_D}
\newcommand{\OS}{\Omega_S}
\newcommand{\OSb}{\Omega_S'}
\newcommand{\LtR}{{L_2(\R)}}
\newcommand{\LtT}{{L_2(0,2\pi)}}
\newcommand{\LtOD}{{L_2(\Omega_D)}}
\newcommand{\LtOS}{{L_2(\Omega_S)}}
\newcommand{\LtOSb}{{L_2(\OSb)}}
\newcommand{\ejkl}{e_{j,k,l}}
\newcommand{\fjkl}{f_{j,k,l}}
\newcommand{\ejklt}{\tilde{e}_{j,k,l}}
\newcommand{\fjklt}{\tilde{f}_{j,k,l}}
\newcommand{\HbzOD}{{H^\beta_0(\OD)}}
\newcommand{\lk}{\lambda_k}
\newcommand{\lkc}{\overline{\lambda}_k}
\newcommand{\lknz}{{\underset{\lk \neq 0}{k=1}}}
\newcommand{\DP}{{\text{DP}}}
\newcommand{\dn}{{\delta_n}}
\newcommand{\Af}{\boldsymbol{A}}
\newcommand{\xf}{\boldsymbol{x}}
\newcommand{\yf}{\boldsymbol{y}}
\newcommand{\Sf}{\boldsymbol{S}}
\newcommand{\ekf}{\boldsymbol{e}_k}
\newcommand{\ektf}{\tilde{\boldsymbol{e}}_k}
\title{On Regularization via Frame Decompositions with Applications in Tomography}
\author{
Simon Hubmer\footnote{Johann Radon Institute Linz, Altenbergerstra{\ss}e 69, A-4040 Linz, Austria, (simon.hubmer@ricam.oeaw.ac.at), Corresponding author.} ,
Ronny Ramlau\footnote{Johannes Kepler University Linz, Institute of Industrial Mathematics, Altenbergerstra{\ss}e 69, A-4040 Linz, Austria, (ronny.ramlau@jku.at)} \footnote{Johann Radon Institute Linz, Altenbergerstra{\ss}e 69, A-4040 Linz, Austria, (ronny.ramlau@ricam.oeaw.ac.at)} ,
Lukas Weissinger\footnote{Johannes Kepler University Linz, Doctoral Program Computational Mathematics, Altenbergerstra{\ss}e 69, A-4040 Linz, Austria (lukas.weissinger@dk-compmath.jku.at)}
}
\begin{document}

\maketitle

\begin{abstract}

In this paper, we consider linear ill-posed problems in Hilbert spaces and their regularization via frame decompositions, which are generalizations of the singular-value decomposition. In particular, we prove convergence for a general class of continuous regularization methods and derive convergence rates under both a-priori and a-posteriori parameter choice rules. Furthermore, we apply our derived results to a standard tomography problem based on the Radon transform.

\smallskip
\noindent \textbf{Keywords.} Frame Decomposition, Singular-Value Decomposition, Inverse and Ill-Posed Problems, Regularization Theory, Computerized Tomography
\end{abstract}


\section{Introduction}

In this paper, we consider linear inverse problems in the standard form
    \begin{equation}\label{Ax=y}
		A x = y  \,,
	\end{equation}
where $A : \, X \to Y$ is a bounded linear operator between real or complex Hilbert spaces $X$ and $Y$. Additionally, we assume that $A$ is compact, which implies both that solving \eqref{Ax=y} is an ill-posed problem, and that there exists a \emph{singular system} $(\sigma_k, v_k, u_k)_{k=1}^\infty$ such that $A$ admits a \emph{singular-value decomposition} (SVD) of the following form (cf.~e.g.,\cite{Engl_Hanke_Neubauer_1996}):
	\begin{equation}\label{SVD_A}
		A x  = \sum_{k=1}^{\infty} \sigma_k \spr{x,v_k}_X u_k \,.
	\end{equation}
Hereby, the \emph{singular values} $\sigma_k$ and the \emph{singular functions} $u_k$, $v_k$ are defined as follows: 
    \begin{enumerate}
        \item The sequence $\Kl{\sigma_k^2}_{k=1}^\infty$ consists of the non-zero eigenvalues of $A^*A$ written down in decreasing order, taking multiplicity into account and observing $\sigma_k > 0$.
        \item The sequence $\Kl{v_k}_{k=1}^\infty$ is a corresponding complete orthonormal system of eigenfunctions, i.e., it satisfies 
        \begin{equation}\label{svd_evprop}
            A^*A v_k = \sigma_k^2 v_k.
        \end{equation} 
        Consequently, it spans $\overline{R(A^*)} = N(A)^\perp$.
        \item The singular functions $u_k$ are defined by $u_k := (1/\sigma_k) A v_k$. Hence, they satisfy $AA^* u_k = \sigma_k^2 u_k$ and form a complete orthonormal system spanning $\overline{R(A)}$.
    \end{enumerate}
Note that due to the above definition, the singular functions $u_k$ and $v_k$ satisfy
    \begin{equation}\label{SVD_uk_vk}
        \sigma_k u_k =  A v_k \,,
        \qquad
        \text{and}
        \qquad
        \sigma_k v_k = A^* u_k \,.
    \end{equation}
The SVD is an important tool for analysing and solving ill-posed problems. In particular, the \emph{minimum-norm least-squares solution} $\xD$ of \eqref{Ax=y} is characterized by
    \begin{equation}\label{def_Ad}
		\xD := A^\dagger y:= \sum_{k=1}^{\infty} \frac{\spr{y,u_k}_Y}{\sigma_k} v_k \,,
	\end{equation}
which is well-defined if and only if the so-called \emph{Picard condition} holds:
	\begin{equation}\label{Picard}
		\sum_{k=1}^{\infty} \frac{\abs{\spr{y,u_k}_Y}^2}{\sigma_k^2} < \infty \,.
	\end{equation}
Furthermore, given noisy data $\yd$, which are typically assumed to satisfy
    	\begin{equation}\label{cond_noise}
		\norm{y - \yd}_Y \leq \delta \,,
	\end{equation}
where $\delta$ denotes the \emph{noise level}, one can define stable approximations $\xad$ of $\xD$ by
	\begin{equation}\label{def_xad}
		\xad := \sum_{k=1}^{\infty} \sigma_k g_\alpha(\sigma_k^2) \spr{\yd,u_k}v_k \,,
	\end{equation}
where $g_\alpha$ is a properly selected approximation of $s \mapsto 1/s$. If the \emph{regularization parameter} $\alpha$ is suitably chosen, e.g.,\ by an a-priori or an a-posteriori parameter choice rule \cite{Engl_1997}, then one can prove that $\xad \to \xD$ as $\delta \to 0$. Furthermore, if, e.g.,\ the \emph{source condition} 
    \begin{equation}\label{sourcecond}
        \xD \in R((A^*A)^\mu) 
    \end{equation}
holds, then one can even prove (order-optimal) convergence rates of the form \cite{Engl_Hanke_Neubauer_1996,Louis_1989}
    \begin{equation*}
        \norm{\xad - \xD}_X = \LandauO\kl{\delta^\frac{2\mu}{2\mu+1}} \,.
    \end{equation*}
Most classic regularization methods can be identified with specific choices of $g_\alpha$, which are also called \emph{spectral filter functions}. For example, for \emph{Tikhonov regularization}, \emph{Landweber iteration}, or the \emph{truncated singular-value decomposition (TSVD)}, there respectively holds \cite{Engl_Hanke_Neubauer_1996,Louis_1989}
    \begin{equation}\label{filters}
    g_\alpha(s) = 
    \begin{cases}
        (s+\alpha)^{-1}\ \,, &\text{Tikhonov,}\\
        s^{-1}\kl{1-(1-s)^{1/\alpha}}\,,&\text{Landweber,}\\
        \begin{cases}
        1/s \,, & s \geq \alpha \,,
        \\
        0 \,, & \text{else} \,,
    \end{cases}
    &\text{TSVD.}
    \end{cases}
    \end{equation}
Unfortunately, for many operators no explicit representation of the SVD is known, and even if it is, its numerical computation might be infeasible (see, e.g.,~\cite{Ramlau_Koutschan_Hofmann_2020}). Additionally, generalizing an available SVD of an operator defined between Hilbert spaces over regular domains to irregular domains is often impossible, e.g.\ in case orthogonality is lost. The situation is somewhat better for finite-dimensional problems, where one can work with the analogously defined SVD of a matrix, but this can also become infeasible when the problem is medium- to large-scale. Hence, even though an important tool in the analysis of ill-posed problems, the SVD often only enjoys limited use in practice.

In order to remedy this situation, several researchers have studied generalizations of the SVD such as the \emph{Wavelet-Vaguelette Decomposition} (WVD) \cite{Donoho_1995,Abramovich_Silverman_1998,Kudryavtsev_Shestakov_2019, Lee_1997,Dicken_Maass_1996, Frikel_Haltmeier_2018} or the \emph{Frame Decomposition} (FD) \cite{Hubmer_Ramlau_2021_01,Hubmer_Ramlau_2020,Ebner_Frikel_Lorenz_Schwab_Haltmeier_2020,Frikel_Haltmeier_2020}. The idea is that by weakening some of the requirements of the SVD such as the eigenvalue properties and the resulting orthogonality of the functions $u_k$ and $v_k$, one may end up with decompositions similar to \eqref{SVD_A} and \eqref{def_Ad} which are easier to derive explicitly for a given operator. For example, the WVD requires an orthonormal wavelet basis $\Kl{\psi_{jk}}_{j,k\in\N}$ on $X$ and two bi-orthogonal sets (vaguelettes) $\Kl{u_{jk}}_{j,k\in\N}$, $\Kl{v_{jk}}_{j,k\in\N}$ on $Y$, connected by the quasi-singular relations 
    \begin{equation}\label{WVD_cond}
        \lambda_{jk} v_{jk} = A \psi_{jk} \,,
        \qquad
        \text{and}
        \qquad
        \lambda_{jk} \psi_{jk} = A^* u_{jk} \,, 
        \qquad
        \forall \, j,k \in \N \,.
    \end{equation}
which are reminiscent of \eqref{SVD_uk_vk}. The resulting decomposition of the operator $A$ is similar to \eqref{SVD_A} and can be used for solving \eqref{Ax=y}. For applications of the WVD to the practically relevant problems of computerized and photoacoustic tomography including different aspects of its numerical realization see e.g.~\cite{Donoho_1995, Frikel_Haltmeier_2018,Frikel_2013}. Extensions of the WVD which also have applications to the two- and three- dimensional Radon transform are e.g.\ given by the biorthogonal curvelet and shearlet decompositions \cite{Candes_Donoho_2002,Colonna_Easley_Guo_Labate_2010}.

In contrast to the WVD, the FD does not specifically work with (orthogonal) wavelets and vaguelettes but with general frames in Hilbert spaces; cf.~Section~\ref{sect_frames}. In particular, the FD requires a frame $\Kl{e_k}_{k\in\N}$ over $X$ and a frame $\Kl{f_k}_{k\in\N}$ over $Y$, connected via
    \begin{equation}\label{cond_frames_connected}
        \lkc \, e_k =  \, A^*f_k \,,
        \qquad
        \forall \,k \in \N \,,
    \end{equation}
where $\lkc$ denotes the complex conjugate of the coefficient $\lk \in \C$. The above condition is clearly connected to \eqref{SVD_uk_vk} and \eqref{WVD_cond}, and leads to the following decomposition:
    \begin{equation}
		A x = \sum_{k=1}^\infty \lk \spr{x,e_k}_X \fkt  \,,
		\qquad \forall \, x \in X \,.
	\end{equation}
Here the functions $\{\fkt\}_{k\in\N}$ denote the dual frame of the frame $\Kl{f_k}_{k\in\N}$; cf.~Section~\ref{sect_frames}.	Furthermore, for $y \in Y$ one can then analogously to \eqref{def_Ad} consider the operator
	\begin{equation}\label{def_AD_expl}
		\AD y  :=   
		 \sum_\lknz^\infty 
		 \frac{1}{\lk} \spr{y,f_k}_Y  \ekt \,,
	\end{equation}
where similarly to above $\{\ekt\}_{k\in\N}$ denotes the dual frame of the frame $\Kl{e_k}_{k\in\N}$. The element $\AD y$ is well-defined if the following analog of the Picard condition \eqref{Picard} holds: 
	\begin{equation}\label{cond_Picard}
		\sum\limits_{\underset{\lk\neq 0}{k = 1}}^\infty\frac{\abs{ \spr{y,f_k}_Y }^2 }{\abs{\lk}^2}
	    < \infty \,.
	\end{equation}
The properties of the operator $\AD$ and its use for obtaining (approximate) solutions of \eqref{Ax=y} was studied in detail in \cite{Hubmer_Ramlau_2021_01}. In particular, it was investigated in which cases $\AD y$ is either a minimum-coefficient or a minimum-norm (least-squares) solution of \eqref{Ax=y}. While we refer to \cite{Hubmer_Ramlau_2021_01} for details, we here only want to mention the special case that $A$ satisfies a  stability condition of the form 
    \begin{equation}\label{cond_A_stability}
	    c_1 \norm{x}_X \leq \norm{Ax}_Z \leq c_2 \norm{x}_X \,,
	    \qquad
	    \forall \, x \in X \,,
	\end{equation}	
for some constants $c_1,c_2 > 0$ with $Z \subseteq Y$ being a Hilbert space. In this case, for each $y \in R(A)$ the unique solution of \eqref{Ax=y} is precisely given by $\AD y$. Furthermore, in this case it is possible to give recipes for finding frames $\Kl{e_k}_{k\in\N}$ and $\Kl{f_k}_{k\in\N}$ which satisfy \eqref{cond_frames_connected}. For example, starting with a frame $\Kl{f_k}_{k\in\N}$ which satisfies the condition
    \begin{equation}\label{cond_norm_Z}
    	a_1 \norm{y}_Z^2 
    	\leq \sum_{k=1}^\infty \ak^2  \abs{\spr{y,f_k}_Y}^2 
    	\leq a_2 \norm{y}_Z^2
    	\,,
    	\qquad
    	\forall \, y \in Y \,,
	\end{equation}
for a sequence of coefficients $0 \neq \ak \in \R$ and some constants $a_1,a_2 > 0$, one can define
    \begin{equation*}
        e_k := \ak A^* f_k \,, 
        \qquad
        \forall \, k \in \N \,,
    \end{equation*}
and it then follows that $\Kl{e_k}_{k\in\N}$ forms a frame over $X$ satisfying \eqref{cond_frames_connected} with $\overline{\lk} = 1/\ak$ \cite{Hubmer_Ramlau_2021_01}. A typical example for an operator satisfying condition \eqref{cond_A_stability} is given by the Radon transform \cite{Louis_1989,Natterer_2001}, and \eqref{cond_norm_Z} can e.g.\ be satisfied if $Y$ and $Z$ are (suitably connected) Sobolev spaces and $\Kl{f_k}_{k\in\N}$ is either an exponential or a wavelet frame/basis. Note also that condition \eqref{cond_A_stability} is satisfied for any continuously invertible operator $A$ with $Z=X$, in which case also \eqref{cond_norm_Z} holds for any frame $\Kl{f_k}_{k\in\N}$ over $Y$. Further details on all of these topics can be found in \cite{Hubmer_Ramlau_2021_01}, which also includes a generalization of condition \eqref{cond_frames_connected} that can be useful if the Hilbert spaces $X$ and $Y$ have a particular product structure \cite{Hubmer_Ramlau_2020, Hubmer_Ramlau_2021_01,Weissinger_2021}. 

In this paper, we focus on different aspects of regularization via FDs. In particular, analogously to \eqref{def_xad} for the SVD we consider stable approximations of $\AD y$ of the form
    \begin{equation*}
        \zad := \sum_\lknz^\infty \lk g_\alpha(\lk^2) \spr{\yd,f_k}_Y \ekt \,,
    \end{equation*}
and derive convergence and convergence rate results under a-prior and a-posteriori parameter choice rules similar to those for the SVD \cite{Engl_Hanke_Neubauer_1996}. We want to note that this work is inspired by our recent investigations of general frame decompositions in Hilbert spaces \cite{Hubmer_Ramlau_2021_01} and their specific application to the atmospheric tomography problem \cite{Hubmer_Ramlau_2020,Weissinger_2021}.

However, we emphasize that different aspects of regularization via WVDs and specific FDs have already been considered in the literature before \cite{Frikel_Haltmeier_2018,Frikel_Haltmeier_2020,Ebner_Frikel_Lorenz_Schwab_Haltmeier_2020}. For example, a WVD approach to photoacoustic tomography was developed in \cite{Frikel_Haltmeier_2018}, where regularization with wavelet sparsity constraints is employed via a soft-thresholding approach. This was generalized to sparse regularization for inverse problems using FDs and nonlinear soft-thresholding in \cite{Frikel_Haltmeier_2020}, where also convergence rates were proven under a-priori parameter choice rules. The analysis in \cite{Frikel_Haltmeier_2020} is based on the assumption that $\Kl{f_k}_{k\in\N}$ forms a frame over $\overline{R(A)}$ instead of over $Y$ as in our current setting. The same is also assumed in the preprint \cite{Ebner_Frikel_Lorenz_Schwab_Haltmeier_2020}, in addition to the requirement that $\Kl{e_k}_{k\in\N}$ forms a frame over $N(A)^\perp = \overline{R(A^*)}$ and that $\lk \in (0,\infty)$ for all $k\in\N$. Note that under these assumptions, for all $y \in D(A^\dagger)$ there holds $A^\dagger y = \AD y$. Working within this setting, the authors of \cite{Ebner_Frikel_Lorenz_Schwab_Haltmeier_2020} prove convergence and convergence rates under a-priori parameter choice rules for continuous regularization methods based on FDs similar to our results. As in this paper, the analysis is based on the standard approach to linear inverse problems \cite{Engl_Hanke_Neubauer_1996}.

In contrast to these papers, here we consider the general FD setup introduced above, i.e., we allow arbitrary $\lk \in \C$ and assume that $\Kl{e_k}_{k\in\N}$ and $\Kl{f_k}_{k\in\N}$ form frames over $X$ and $Y$, respectively. This setting is beneficial, since it allows more general and potentially different frame decompositions of a given operator. Furthermore, it is often easier to find a frame over the whole spaces $X$ and $Y$ instead of over $N(A)^\perp$ and $\overline{R(A)}$, respectively. Of course, one may theoretically obtain such frames by projecting the frames $\{ e_k \}_{k\in\N}$ and $\{ f_k \}_{k\in\N}$ onto these subspaces. However, doing so is practically infeasible, since in most cases an explicit characterization of $N(A)^\perp$ and $\overline{R(A)}$, and thus of the orthogonal projectors onto these subspaces is unavailable or involves the operator $A^\dagger$. This situation is comparable to the SVD of a compact operator: the existence is guaranteed but explicit representations are often unavailable, which is a motivation for considering frames in the first place. Furthermore, while \eqref{cond_frames_connected} implies that $\{ f_k \}_{k\in\N, \lambda_k \neq 0}$ forms a frame over $\overline{R(A)}$, the set $\{ e_k \}_{k\in\N, \lambda_k \neq 0}$ does not necessarily form a frame over $N(A)^\perp$, unless further assumptions on the values $\lambda_k$, the frame $\{f_k\}_{k\in\N}$, and/or the mapping properties of the operator $A$ are made. Note that by redefining the frame functions $e_k$ one could always achieve that $\lambda_k \in \R_0^+$, at the cost of potentially changing the solution properties of the element $\AD y$; compare with the conditions in \cite{Hubmer_Ramlau_2021_01}. However, the fact that $\lk$ may also be zero is a crucial difference to the setting of \cite{Ebner_Frikel_Lorenz_Schwab_Haltmeier_2020}, since then \eqref{cond_frames_connected} no longer implies that all frame functions $e_k$ are elements in $N(A)^\perp = \overline{R(A^*)}$. Hence, every dual frame function $\ekt$ may also have some component outside of $N(A)^\perp$, since by definition it depends on all frame functions $e_k$ including those which correspond to $\lambda_k = 0$. This may translate to the element $\AD y$, except in those situations characterized in \cite{Hubmer_Ramlau_2021_01} in which $\AD y = A^\dagger y \in N(A)^\perp$. The benefit of allowing $\lambda_k = 0$ is that it can make the search for frames satisfying \eqref{cond_frames_connected} easier, as is the case e.g.\ for the atmospheric tomography problem \cite{Hubmer_Ramlau_2020}. In contrast, under the more restrictive assumptions of \cite{Ebner_Frikel_Lorenz_Schwab_Haltmeier_2020} there always holds $A^\dagger y = \mathcal{A}y$, which is not necessarily the case in our more general setting (see above). Hence, while \cite{Ebner_Frikel_Lorenz_Schwab_Haltmeier_2020} considers the stable approximation of $A^\dagger y$ in the presence of noisy data via its representation in terms of frames, here we consider the stable approximation of the (approximate) solution $\mathcal{A} y$. Finally, note that in contrast to \cite{Ebner_Frikel_Lorenz_Schwab_Haltmeier_2020} we also present convergence rates results for an a-posteriori parameter choice rule adapted from the discrepancy principle, as well as numerical results illustrating our derived theory on the example of a standard tomography problem based on the Radon transform. 

The outline of this paper is as follows: In Section~\ref{sect_Regularizations}, after reviewing some necessary material on frames in Hilbert spaces, we consider continuous regularization methods based on frame decompositions and show convergence and convergence rates under standard assumptions, both under a-priori and a-posteriori parameter choice rules. In Section~\ref{sect_Radon_Trans}, we then apply our results to a standard tomography problem based on the Radon transform, providing numerical examples for specific FDs and comparing the results of different regularization methods. Section~\ref{sect_conclusion} then summarizes our results.

\section{Regularization via Frame Decompositions}\label{sect_Regularizations}

\subsection{Background on Frames in Hilbert Spaces}\label{sect_frames}

Before deriving our results on regularization via FDs, we first recall some basic facts on frames in Hilbert spaces. This short summary, based on the seminal work \cite{Daubechies_1992}, is adapted from our previous publications \cite{Hubmer_Ramlau_2020,Hubmer_Ramlau_2021_01}. First, recall the definition of a frame.

\begin{definition}\label{def_frame}
A sequence $\{e_k\}_{k \in \N}$ in a Hilbert space $X$ is called a frame over $X$, if and only if there exist \emph{frame bounds} $0< B_1,B_2 \in \R$ such that for all $x \in X$ there holds
	\begin{equation}\label{eq_framedef}
    	B_1 \norm{x}_X^2 \leq \sum\limits_{k=1}^\infty \abs{\spr{x,e_k}_X}^2 \leq B_2 \norm{x}_X^2 \,.
	\end{equation}
\end{definition}

For a given frame $\{e_k\}_{k\in \N}$ one can consider the \emph{frame (analysis) operator} $F$ and its adjoint \emph{(synthesis)} operator $F^*$, which are given by
    \begin{equation}\label{def_F_Fad}
    \begin{split}
        &F \, : \, X \to \lt(\N) \,, \qquad
        x \mapsto \Kl{\spr{x,e_k}_X}_{k \in \N} \,,
        \\ 
        &F^* \, : \, \lt(\N) \to X \,, \qquad
        \Kl{a_k}_{k\in\N} \mapsto \sum\limits_{k=1}^\infty a_k e_k  \,.
    \end{split}	
    \end{equation}
Due to \eqref{eq_framedef} there holds
    \begin{equation}\label{eq_bound_F_Fadj}
        \sqrt{B_1} \leq \norm{F} = \norm{F^*} \leq \sqrt{B_2} \,.
    \end{equation}
Furthermore, one can define the operator $S := F^*F$, i.e.,
    \begin{equation}\label{Def_S}
        S x := \sum\limits_{k=1}^\infty \spr{x,e_k}_X e_k \,,
    \end{equation}
which is a bounded and continuously invertible linear operator with $B_1 I \leq S \leq B_2 I$ and $B_2^{-1} I \leq S^{-1} \leq B_1^{-1} I$. Hence, it follows that with $\et_k := S^{-1}e_k$ there holds
    \begin{equation}\label{eq_frame}
    	B_2^{-1} \norm{x}_X^2 \leq \sum\limits_{k=1}^\infty \abs{\spr{x,\ekt}_X}^2 \leq B_1^{-1} \norm{x}_X^2 \,,
	\end{equation}
for all $x \in X$, and thus the set $\{\et_k\}_{k\in \N}$ also forms a frame over $X$ with frame bounds $B_2^{-1},B_1^{-1}$ which is called the \emph{dual frame} of $\{e_k\}_{k \in \N}$. For the corresponding operators
    \begin{equation}\label{def_F_Fad_t}
    \begin{split}
        & \Ft \, : \, X \to \lt(\N) \,, \qquad
        x \mapsto \Kl{\spr{x,\ekt}_X}_{k \in \N} \,,
        \\ 
        &\Ft^* \, : \, \lt(\N) \to X \,, \qquad
        \Kl{a_k}_{k\in\N} \mapsto \sum\limits_{k=1}^\infty a_k \ekt  \,.
    \end{split}
    \end{equation}
it follows analogously to \eqref{eq_bound_F_Fadj} that 
    \begin{equation}\label{eq_bound_F_Fadj_t}
        \sqrt{1/B_2} \leq \norm{\Ft} = \norm{\Ft^*} \leq \sqrt{1/B_1} \,.
    \end{equation}
In particular, for any sequence of coefficients $a=\{a_k\}_{k\in\N}$ there holds
    \begin{equation}\label{Fadj_norm}
        \norm{ \sum_{k\in\N}a_k\tilde{e}_k }_X
        =
        \norm{\tilde{F}^\ast a}_X
        \leq 
        \sqrt{1/B_1}\norm{a}_{\lt(\N)}.
    \end{equation}
Furthermore, it can be shown that there holds
    \begin{equation*}
        \Ft^*F = F^* \Ft = I \,,
    \end{equation*}
and thus any $x \in X$ can be written in the form
    \begin{equation}\label{eq_frame_rec}
        x = \sum\limits_{k=1}^\infty \spr{x, \et_k}_X e_k
        = \sum\limits_{k=1}^\infty \spr{x, e_k}_X \et_k \,.
    \end{equation}
Note that for any frame $\Kl{e_k}_{k\in\N}$ the following statements are equivalent (see, e.g.,~\cite{Christensen_2016}):
    \begin{equation}\label{eq_frame_equivalences}
    \begin{split}
        N(F^*) = 0 
        \quad &\Leftrightarrow \quad
        \Kl{e_k}_{k\in\N} \text{ is a (Riesz) basis}
        \\
        \quad &\Leftrightarrow \quad
        \Kl{e_k}_{k\in\N} \text{ and } \Kl{\ekt}_{k\in\N} \text{ are biorthogonal}
        \\
        \quad &\Leftrightarrow \quad
        \Kl{e_k}_{k\in\N} \text{ is exact (i.e.\ no element can be deleted)} \,.
    \end{split}
    \end{equation}    
In general there holds holds $\Kl{0} \subset \,  N(F^*) = N(\Ft^*)$, an thus the decomposition of $x$ given in \eqref{eq_frame_rec} is not unique, which is a key differences between frames and bases. However, this decomposition can be understood as the \emph{most economical} one (cf.~\cite{Daubechies_1992}).

\subsection{Continuous Regularization Methods}

In this section, we consider general continuous regularization methods based on FDs similar to those based on the SVD \cite{Engl_Hanke_Neubauer_1996}. More precisely, for $\alpha > 0$ we consider the functions
	\begin{equation}\label{def_zad}
		\zad = \sum_\lknz^\infty \lk g_\alpha(\lk^2) \spr{\yd,f_k}_Y \ekt \,,
	\end{equation}
as approximations of $\AD y$ given noisy data $\yd$, as well as the functions
	\begin{equation}\label{def_za}
		\za = \sum_\lknz^\infty \lk g_\alpha(\lk^2) \spr{y,f_k}_Y \ekt \,,
	\end{equation}
in the noise-free case. Here, $g_\alpha : \C \to \C$ is a suitable approximation of $s\mapsto 1/s$ to be specified below. We will prove convergence as well as convergence rates of the form
    \begin{equation*}
        \norm{\AD y - \zad}_Y = \LandauO\kl{\delta^{\frac{2\mu}{2\mu+1}}}
    \end{equation*}   
for both a-priori and a-posterior parameter choice rules under standard assumptions. Throughout the analysis, which is based on classical arguments (see e.g.~\cite{Engl_Hanke_Neubauer_1996}), we use

\begin{assumption}\label{assumption_main}
The operator $A: X \to Y$ is bounded, linear, and compact between the (complex) Hilbert spaces $X$ and $Y$. Furthermore, the set $\{e_k\}_{k\in\N}$ forms a frame over $X$ with frame bounds $B_1, B_2$, and the set $\{f_k\}_{k\in\N}$ forms a frame over $Y$ with frame bounds $C_1, C_2$. Moreover, there exist coefficients $\lk \in \C$ such that \eqref{cond_frames_connected} holds. In addition, let $g_\alpha : \C \to \C$ be a parameter-dependent family of piecewise continuous, bounded functions defined for all $\alpha > 0$, satisfying
    \begin{equation}\label{ass_ga_limit}
        \lim_{\alpha \to 0} g_\alpha(\lambda) = \frac{1}{\lambda} \,,
        \qquad
        \forall \, \lambda \in \C \,.
    \end{equation}
Additionally, assume there exists a constant $C>0$ independent of $\alpha$ such that
    \begin{equation}\label{ass_ga_bound}
        \abs{\lambda g_\alpha(\lambda)} \leq C \,,
        \qquad
        \forall \, \lambda \in \C \,.
    \end{equation}
\end{assumption}
Note that \eqref{ass_ga_limit} and \eqref{ass_ga_bound}, as well as the following definitions, which we need for the upcoming analysis, are the same as those used in the classic SVD analysis \cite{Engl_Hanke_Neubauer_1996,Louis_1989}:

\begin{definition}
For all $\alpha > 0$ we define
    \begin{equation}\label{def_ra}
       r_\alpha(\lambda) := 1- \lambda g_\alpha(\lambda) \,, 
    \end{equation}
as well as
    \begin{equation}\label{def_Ga}
        G_\alpha := \sup \Kl{ \abs{g_\alpha(\lambda)} \, \vert \, \lambda \in \C} \,.
    \end{equation}
Furthermore,
    \begin{equation}\label{def_gamma}
        \gamma := \sup\Kl{ \abs{r_\alpha(\lambda) } \, \vert\, \alpha > 0\,, \lambda \in \C }
        \overset{\eqref{ass_ga_bound}}{\leq} (C+1) \,.
    \end{equation}
\end{definition}

Next, concerning the well-definedness of $\zad$ and $\za$ we have
\begin{lemma}\label{lem_welldef}
Let Assumption~\ref{assumption_main} hold and let $y,\yd \in Y$. Then $\zad$ and $\za$ as given in \eqref{def_zad} and \eqref{def_za} are well-defined. Furthermore, if additionally the Picard condition \eqref{cond_Picard} holds, then $\AD y$ given in \eqref{def_AD_expl} is well-defined.
\end{lemma}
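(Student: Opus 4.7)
The proof reduces each of the three statements to checking that a certain coefficient sequence is in $\lt(\N)$, at which point the synthesis operator $\Ft^*$ of the dual frame $\{\ekt\}_{k\in\N}$ does the work. Recall from \eqref{def_F_Fad_t} and \eqref{eq_bound_F_Fadj_t} that $\Ft^*: \lt(\N) \to X$ is bounded, with the quantitative bound \eqref{Fadj_norm}. Hence, for any sequence $\{c_k\}_{k\in\N} \in \lt(\N)$, the series $\sum_k c_k \ekt$ converges in $X$. By extending sums over $\{k : \lk \neq 0\}$ to all of $\N$ (setting $c_k = 0$ where $\lk = 0$), it suffices in each case to verify the $\lt(\N)$-membership of the coefficient sequence.

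\textbf{Well-definedness of $\AD y$.} The coefficient sequence is $\{\spr{y,f_k}_Y / \lk\}_{\lk\neq 0}$, and the Picard condition \eqref{cond_Picard} says precisely that this sequence lies in $\lt(\N)$. Applying $\Ft^*$ then yields $\AD y \in X$.

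\textbf{Well-definedness of $\zad$ and $\za$.} The coefficient sequences are $\{\lk g_\alpha(\lk^2) \spr{\yd,f_k}_Y\}_{\lk\neq 0}$ and $\{\lk g_\alpha(\lk^2) \spr{y,f_k}_Y\}_{\lk\neq 0}$. The key step is to obtain, for each fixed $\alpha > 0$, a uniform-in-$k$ bound on the multiplier $|\lk g_\alpha(\lk^2)|$. Since $g_\alpha$ is bounded, we have $G_\alpha := \sup_\lambda |g_\alpha(\lambda)| < \infty$ as in \eqref{def_Ga}; combining this with \eqref{ass_ga_bound} applied at $\lambda = \lk^2$ gives
\begin{equation*}
    \abs{\lk g_\alpha(\lk^2)}^2 = \abs{\lk^2 g_\alpha(\lk^2)} \cdot \abs{g_\alpha(\lk^2)} \leq C\,G_\alpha .
\end{equation*}
Together with the upper frame bound $\sum_k \abs{\spr{\yd,f_k}_Y}^2 \leq C_2 \norm{\yd}_Y^2$ for $\{f_k\}_{k\in\N}$, this yields
\begin{equation*}
    \sum_{\lk \neq 0} \abs{\lk g_\alpha(\lk^2) \spr{\yd,f_k}_Y}^2 \leq C\,G_\alpha\,C_2 \norm{\yd}_Y^2 < \infty ,
\end{equation*}
so the coefficient sequence lies in $\lt(\N)$ and thus $\zad \in X$. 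The same estimate with $y$ replacing $\yd$ gives $\za \in X$.

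\textbf{Main obstacle.} The only nontrivial point is the uniform bound on $|\lk g_\alpha(\lk^2)|$. Neither the boundedness of $g_\alpha$ (which gives $|\lk g_\alpha(\lk^2)| \leq |\lk| G_\alpha$, exploding as $|\lk| \to \infty$) nor \eqref{ass_ga_bound} by itself (which gives $|\lk g_\alpha(\lk^2)| \leq C/|\lk|$, exploding as $|\lk| \to 0$) suffices, but their geometric-mean combination does. Note that this bound depends on $\alpha$ and therefore only establishes well-definedness; uniform-in-$\alpha$ control will be needed for the convergence analysis that follows.
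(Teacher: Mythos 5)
Your proposal is correct and follows essentially the same route as the paper: both reduce well-definedness to $\lt(\N)$-membership of the coefficient sequence via the bounded dual synthesis operator \eqref{Fadj_norm}, use the combined bound $\abs{\lk g_\alpha(\lk^2)}^2 \leq C\,G_\alpha$ (the paper writes $C\norm{g_\alpha}_\infty$) together with the upper frame bound $C_2$ of $\{f_k\}_{k\in\N}$ for $\za$ and $\zad$, and invoke the Picard condition \eqref{cond_Picard} directly for $\AD y$. Your explicit remark on why neither \eqref{ass_ga_bound} nor boundedness of $g_\alpha$ alone suffices is a nice clarification, but the argument itself matches the paper's.
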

\begin{proof}
Let $y \in Y$ be arbitrary but fixed. Since the set $\{e_k\}_{k\in\N}$ forms a frame over $X$ with frame bounds $B_1,B_2$, it follows from \eqref{Fadj_norm} that
    \begin{equation*}
        \norm{\za}_Y^2 
        \leq 
        (1/B_1) \sum_\lknz^\infty \abs{ \lk g_\alpha(\lk^2) \spr{y,f_k}_Y }^2
        \overset{\eqref{ass_ga_bound}}{\leq}
        (C/B_1) \norm{g_\alpha}_\infty \sum_\lknz^\infty \abs{\spr{y,f_k}_Y }^2
        \,.
    \end{equation*}
Furthermore, since $\{f_k \}_{k\in\N}$ forms a frame over $Y$ with bounds $C_1,C_2$, it follows that
    \begin{equation*}
        \norm{\za}_Y^2 
        \leq C_2 (C /B_1) \norm{g_\alpha}_\infty \norm{y}_Y^2 \,.
    \end{equation*}
Hence, since we assumed that $g_\alpha$ is bounded it follows that $\za$ is well-defined. Analogously we can show that $\zad$ is well-defined. Furthermore, similarly to above we have
    \begin{equation*}
        \norm{\AD y}_Y^2 \leq (1/B_1) \sum_\lknz^\infty \abs{ \frac{\spr{y,f_k}_Y}{\lk} }^2 
        \overset{\eqref{cond_Picard}}{<} \infty \,.
    \end{equation*}
Hence, if \eqref{cond_Picard} is satisfied then also $\AD y$ is well-defined, which concludes the proof.
\end{proof}

The following result establishes convergence of $\za$ to $\AD y$ in the noise-free case.
\begin{theorem}\label{thm_conv_noisefree}
Let Assumption~\ref{assumption_main} hold and let $y \in Y$ satisfy \eqref{cond_Picard}. Then for $\AD y$ and $\za$ as defined in \eqref{def_AD_expl} and \eqref{def_za}, respectively, there holds
	\begin{equation*}
		\lim_{\alpha \to 0} \norm{\AD y - \za}_X = 0 \,.
	\end{equation*}
\end{theorem}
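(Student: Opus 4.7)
The plan is to reduce the statement to a straightforward dominated convergence argument on $\ell_2(\N)$, mirroring the classical SVD proof but using the frame bounds from Section~\ref{sect_frames}. The key observation is that both $\mathcal{A}y$ and $z_\alpha$ are synthesis-operator images of $\ell_2$-sequences indexed by $\{k : \lambda_k \neq 0\}$, so the $X$-norm of their difference can be controlled by the $\ell_2$-norm of the difference of coefficients via estimate \eqref{Fadj_norm}.

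First I would combine \eqref{def_AD_expl} and \eqref{def_za} into a single series, using the definition \eqref{def_ra} of $r_\alpha$, to write
\begin{equation*}
\mathcal{A} y - z_\alpha = \sum_{\underset{\lambda_k \neq 0}{k=1}}^\infty \frac{1 - \lambda_k^2 g_\alpha(\lambda_k^2)}{\lambda_k}\,\spr{y,f_k}_Y\, \tilde{e}_k = \sum_{\underset{\lambda_k \neq 0}{k=1}}^\infty \frac{r_\alpha(\lambda_k^2)}{\lambda_k}\,\spr{y,f_k}_Y\, \tilde{e}_k.
\end{equation*}
Applying \eqref{Fadj_norm} then yields
\begin{equation*}
\norm{\mathcal{A}y - z_\alpha}_X^2 \leq \frac{1}{B_1}\sum_{\underset{\lambda_k \neq 0}{k=1}}^\infty \abs{r_\alpha(\lambda_k^2)}^2\,\frac{\abs{\spr{y,f_k}_Y}^2}{\abs{\lambda_k}^2}.
\end{equation*}

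Next I would verify the hypotheses of dominated convergence (for sums). The pointwise convergence $r_\alpha(\lambda_k^2) \to 0$ as $\alpha \to 0$ for every $k$ with $\lambda_k \neq 0$ follows directly from \eqref{ass_ga_limit} and the definition \eqref{def_ra}. A uniform bound is provided by \eqref{def_gamma}, which gives $\abs{r_\alpha(\lambda_k^2)}^2 \leq \gamma^2$ independently of $\alpha$ and $k$. Consequently, each summand is bounded by $\gamma^2 \abs{\spr{y,f_k}_Y}^2/\abs{\lambda_k}^2$, and the Picard condition \eqref{cond_Picard} ensures that this majorant is summable.

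Dominated convergence for sums (i.e., the standard fact that if $|c_k(\alpha)| \leq d_k$ with $\sum d_k < \infty$ and $c_k(\alpha) \to 0$ for every $k$, then $\sum c_k(\alpha) \to 0$) then lets me pass to the limit $\alpha \to 0$ in the displayed bound, giving $\norm{\mathcal{A}y - z_\alpha}_X \to 0$. I do not anticipate any real obstacle; the only care needed is to keep the summation restricted to $\{k : \lambda_k \neq 0\}$ throughout, so that dividing by $\lambda_k$ is legitimate and the Picard condition directly supplies the required integrable majorant.
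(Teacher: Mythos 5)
Your proposal is correct and follows essentially the same route as the paper's own proof: the identical series representation of $\AD y - \za$ in terms of $r_\alpha(\lk^2)/\lk$, the frame estimate \eqref{Fadj_norm} with the lower bound $B_1$, the uniform bound on $r_\alpha$ from \eqref{ass_ga_bound}, the Picard condition as summable majorant, and dominated convergence together with \eqref{ass_ga_limit}. No gaps; the only cosmetic difference is that the paper additionally cites Lemma~\ref{lem_welldef} to note well-definedness of $\AD y$ and $\za$ before estimating.
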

\begin{proof}
Let $y\in Y$ be arbitrary but fixed and let \eqref{cond_Picard} hold. Then due to Lemma~\ref{lem_welldef} both $\AD y$ and $\za$ are well-defined. Furthermore, due to \eqref{def_AD_expl} and \eqref{def_za} there holds 
    \begin{equation}\label{helper_eq_1}
		\AD y - \za = \sum_\lknz^\infty (1/\lk - \lk g_\alpha(\lk^2) )\spr{y,f_k}_Y \ekt
		=
		\sum_\lknz^\infty \frac{r_\alpha(\lk^2) }{\lk} \spr{y,f_k}_Y \ekt \,.
	\end{equation}
Since $\{e_k\}_{k\in\N}$ forms a frame with frame bounds $B_1,B_2$, it follows from \eqref{Fadj_norm} that
    \begin{equation}\label{helper_ineq_1}
		\norm{\AD y - \za}_Y^2 \leq \frac{1}{B_1} \sum_\lknz^\infty \abs{\frac{r_\alpha(\lk^2) }{\lk} \spr{y,f_k}_Y }^2 \,.
	\end{equation}
Since due to \eqref{ass_ga_bound} and the definition of $r_\alpha$ there holds $\abs{r_\alpha(\lambda)} \leq (C + 1)$, it follows that
    \begin{equation*}
        \norm{\AD y - \za}_Y^2 \leq \frac{(C+1)^2}{B_1}  \sum_\lknz^\infty \frac{\abs{\spr{y,f_k}_Y}^2 }{\abs{\lk}^2} \,.
	\end{equation*}
The right-hand side in the above inequality is uniformly bounded independently of $\alpha$ due to \eqref{cond_Picard}. Hence, we can apply the dominated convergence theorem to obtain
    \begin{equation*}
		\lim_{\alpha \to 0} \norm{\AD y - \za}_X^2 
		\overset{\eqref{helper_ineq_1}}{\leq}
		\lim_{\alpha \to 0} \frac{1}{B_1} \sum_\lknz^\infty \abs{\frac{r_\alpha(\lk^2) }{\lk} \spr{y,f_k}_Y }^2 
		\leq
		\frac{1}{B_1} \sum_\lknz^\infty \lim_{\alpha \to 0} \abs{r_\alpha(\lk^2)}^2 \frac{\abs{\spr{y,f_k}_Y}^2}{\abs{\lk}^2} \,.
	\end{equation*}
Since due to \eqref{ass_ga_limit} there holds $\lim_{\alpha \to 0} r_\alpha(\lambda) = 0 $ for all $\lambda \neq 0 $, it follows that
	\begin{equation*}
		\lim_{\alpha \to 0}\norm{\AD y - \za }_X = 0 \,,
	\end{equation*}
which yields the assertion and thus concludes the proof.
\end{proof}
Next, we derive an upper bound on the data-propagation error in the following

\begin{theorem}\label{thm_stability_noise}
Let Assumption~\ref{assumption_main} hold, let $y,\yd \in Y$ satisfy \eqref{cond_noise}, and let $\za$, $\zad$ be as in \eqref{def_za}, \eqref{def_zad}, respectively. Then with $G_\alpha$ as defined in \eqref{def_Ga} there holds
	\begin{equation*}
		\norm{\za - \zad}_X \leq \delta \sqrt{C C_2/ B_1 } \sqrt{G_\alpha} \,.  
	\end{equation*}
\end{theorem}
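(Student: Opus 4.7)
The plan is to follow the same pattern used in the two preceding results: start from the series representation, push the norm inside via the dual-frame estimate \eqref{Fadj_norm}, and then split the resulting spectral factor cleverly so that both the Picard-type boundedness \eqref{ass_ga_bound} and the uniform bound $G_\alpha$ on $g_\alpha$ itself contribute.

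First I would write the difference explicitly from \eqref{def_za} and \eqref{def_zad} as
\begin{equation*}
    \za - \zad = \sum_\lknz^\infty \lk g_\alpha(\lk^2)\,\spr{y - \yd, f_k}_Y\,\ekt,
\end{equation*}
which is well-defined by Lemma~\ref{lem_welldef}. Applying \eqref{Fadj_norm} with coefficients $a_k := \lk g_\alpha(\lk^2)\spr{y-\yd,f_k}_Y$ gives
\begin{equation*}
    \norm{\za - \zad}_X^2 \leq \frac{1}{B_1} \sum_\lknz^\infty \abs{\lk g_\alpha(\lk^2)}^2 \abs{\spr{y-\yd, f_k}_Y}^2.
\end{equation*}

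The key algebraic step, and the only place where a little care is needed, is the estimate
\begin{equation*}
    \abs{\lk g_\alpha(\lk^2)}^2 = \abs{\lk}^2 \abs{g_\alpha(\lk^2)}^2 = \bigl(\abs{\lk^2}\abs{g_\alpha(\lk^2)}\bigr)\cdot \abs{g_\alpha(\lk^2)} \leq C\, G_\alpha,
\end{equation*}
where the first factor is bounded by $C$ via \eqref{ass_ga_bound} applied at $\lambda = \lk^2$, and the second factor is bounded by $G_\alpha$ from \eqref{def_Ga}. This is the standard SVD trick of distributing $|\lk g_\alpha(\lk^2)|^2$ across the two available qualitative bounds so as to obtain $C G_\alpha$ instead of the weaker $G_\alpha^2\|\lk\|^2$ or $C^2$.

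Finally, I would extend the sum to all $k\in\N$ (only enlarging the right-hand side) and invoke the upper frame inequality \eqref{eq_framedef} for $\{f_k\}_{k\in\N}$ with frame bound $C_2$, together with the noise estimate \eqref{cond_noise}, to obtain
\begin{equation*}
    \sum_{k=1}^\infty \abs{\spr{y-\yd, f_k}_Y}^2 \leq C_2 \norm{y-\yd}_Y^2 \leq C_2 \delta^2.
\end{equation*}
Combining the three estimates yields
\begin{equation*}
    \norm{\za - \zad}_X^2 \leq \frac{C\,C_2\,G_\alpha}{B_1}\,\delta^2,
\end{equation*}
and taking square roots gives the claimed inequality. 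I do not expect any real obstacle here; the only subtlety, as noted, is the balanced splitting of $|\lk g_\alpha(\lk^2)|^2$ so that the final dependence on $G_\alpha$ is linear rather than quadratic, which is what makes the bound sharp enough to be useful in the subsequent a-priori/a-posteriori parameter choice analysis.
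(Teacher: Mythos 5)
Your proposal is correct and follows essentially the same route as the paper's proof: express the difference via \eqref{def_za}, \eqref{def_zad}, apply \eqref{Fadj_norm}, split $\abs{\lk g_\alpha(\lk^2)}^2 = \abs{\lk^2 g_\alpha(\lk^2)}\cdot\abs{g_\alpha(\lk^2)} \leq C\,G_\alpha$ using \eqref{ass_ga_bound} and \eqref{def_Ga}, and finish with the upper frame bound $C_2$ of $\{f_k\}_{k\in\N}$ and \eqref{cond_noise}. The only difference is that you make the balanced splitting of the spectral factor explicit, which the paper states more tersely.
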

\begin{proof}
Let $y,\yd \in Y$ satisfying \eqref{cond_noise} be arbitrary but fixed. Then due to Lemma~\eqref{lem_welldef} both $\za$ and $\zad$ are well-defined. By their definitions \eqref{def_za} and \eqref{def_zad} it follows that
    \begin{equation*}
        \za - \zad = \sum_\lknz^\infty \lk g_\alpha(\lk^2) \spr{y- \yd,f_k}_Y \ekt \,.
    \end{equation*}
Since $\{e_k\}_{k\in\N}$ forms a frame with frame bounds $B_1,B_2$, it follows from \eqref{Fadj_norm} that
	\begin{equation*}
		\norm{\za - \zad}_X^2 
		\leq
		\frac{1}{B_1}  
		\sum_\lknz^\infty \abs{\lk g_\alpha(\lk^2)}^2 \abs{\spr{y - \yd,f_k}_Y}^2  \,,
	\end{equation*}
and thus together with the definition \eqref{def_Ga} of $G_\alpha$ and \eqref{ass_ga_bound} we get		
\begin{equation*}
		\norm{\za - \zad}_X^2 
		\leq
		\frac{C G_\alpha }{B_1}
		\sum_\lknz^\infty \abs{\spr{y - \yd,f_k}_Y}^2  \,.
	\end{equation*}
Hence, since $\{f_k\}_{k\in\N}$ forms a frame with frame bounds $C_1,C_2$, it follows that
    \begin{equation*}
        \norm{\za - \zad}_X^2 
		\leq
		C G_\alpha (C_2/B_1)
		\norm{y - \yd}_Y^2
		\overset{\eqref{cond_noise}}{\leq}
		C G_\alpha (C_2/B_1)
		\delta^2 \,,
    \end{equation*}	
which after taking the square root on both sides yields the assertion.
\end{proof}

Combining Theorem~\ref{thm_conv_noisefree} and Theorem~\ref{thm_stability_noise}, for the total error we obtain
	\begin{equation}\label{helper_ineq_2}
		\norm{\AD y - \zad}_X \leq \norm{\AD y - \za}_X + \delta \sqrt{C C_2 /B_1} \sqrt{G_\alpha}  \,, 
	\end{equation}
where the first term tends to $0$ for $\alpha \to 0$ if $y$ satisfies \eqref{cond_Picard}, similarly as in the SVD case. Hence, if the regularization parameter $\alpha = \alpha(\delta)$ is chosen such that
    \begin{equation*}
        \alpha(\delta) \to 0 \,, 
        \qquad
        \text{and}
        \qquad
        \delta \sqrt{G_{\alpha(\delta)}} \to 0 \,,
        \qquad
        \text{as}
        \quad 
        \delta \to 0 \,,
    \end{equation*}
then we obtain convergence of $z_{\alpha(\delta)}^\delta$ to $\AD y$ as the noise level $\delta \to 0$.

\subsection{A-priori Parameter Choice Rules}

In this section we derive convergence rates results under a-priori parameter choice rules similar to those for the SVD case (cf.~\cite{Engl_Hanke_Neubauer_1996}). These typically require source conditions such as the H\"older-type condition \eqref{sourcecond} for some $\mu > 0$. Alternatively, this can be rewritten as 
    \begin{equation*}
		\exists \, w \in X : 
		\quad
		\xD = (A^*A)^\mu w \,.
	\end{equation*}
Using the SVD \eqref{svd_evprop} of the operator $A$ it follows
    \begin{equation*}
        \sigma_k^{-1} \spr{y,u_k}_Y
        = 
        \spr{\xD,v_k}_X 
        = 
        \spr{(A^*A)^\mu w,v_k}_X 
		= 
		\sigma_k^{2\mu} \spr{w,v_k}_X \,,
    \end{equation*}
and thus \eqref{sourcecond} is equivalent to
    \begin{equation}\label{sourcecond_SVD_w}
        \exists \, w \in X \,\, \forall \, k \in \N : 
		\quad
		\spr{y,u_k}_Y = \spr{(A^*A)^\mu w,v_k}_X 
		= \sigma_k^{2\mu+1} \spr{w,v_k} \,,
    \end{equation}
which in turn is equivalent to the decay condition \cite[Prop. 3.13]{Engl_Hanke_Neubauer_1996}
    \begin{equation}\label{decay_SVD}
        \sum_{k=1}^\infty \sigma_k^{-(4\mu+2)} \abs{\spr{y,u_k}_Y}^2 < \infty \,.
    \end{equation}
For the upcoming analysis, we use a source condition similar to \eqref{sourcecond_SVD_w}, namely
    \begin{equation}\label{sourcecond_frames}
		\exists \, w \in X \,\, \forall \, k \in \N \,, \lk \neq 0 : 
		\quad
		\spr{y,f_k}_Y = \lk^{2\mu+1} \spr{w,e_k}_X \,, 
	\end{equation}
which analogously to \eqref{decay_SVD} implies the decay condition
	\begin{equation*}
		\sum_{\lknz}^\infty \abs{\lk}^{-(4\mu+2)} \abs{\spr{y,f_k}_Y}^2 < \infty  \,.
	\end{equation*}
We now start our analysis by deriving a convergence rate estimate given exact data.
\begin{theorem}\label{thm_rates_exact_apriori}
Let Assumption~\ref{assumption_main} hold, let $y \in Y$ satisfy \eqref{cond_Picard}, and let $\AD y$ and $\za$ be as in \eqref{def_AD_expl} and \eqref{def_za}, respectively. Moreover, let $\mu > 0$, $\alpha_0 > 0$, and assume that for all $\alpha \in (0,\alpha_0)$ and $\lambda \in \C$ the function $r_\alpha$ defined in \eqref{def_ra} satisfies
	\begin{equation}\label{cond_ra}
		\abs{\lambda}^{\mu} \abs{r_\alpha(\lambda)} \leq c_{\mu} \alpha^{\mu} \,,
	\end{equation}
for some constant $c_{\mu} > 0$. Then if the source condition \eqref{sourcecond_frames} holds it follows that
	\begin{equation*}
		\norm{\AD y - \za}^2
		\leq
		\Kl{\frac{B_2}{B_1} c_{\mu}^2 \norm{w}_X^2 } \alpha^{2\mu}  \,.
	\end{equation*}
\end{theorem}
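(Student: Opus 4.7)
The plan is to mimic the noise-free convergence proof (Theorem~\ref{thm_conv_noisefree}) but, instead of merely invoking dominated convergence, to exploit the source condition \eqref{sourcecond_frames} and the decay assumption \eqref{cond_ra} on $r_\alpha$ to obtain an explicit quantitative bound in terms of $\alpha$. Concretely, I would start from the same identity used in the proof of Theorem~\ref{thm_conv_noisefree}, namely
\begin{equation*}
  \AD y - \za = \sum_\lknz^\infty \frac{r_\alpha(\lk^2)}{\lk}\spr{y,f_k}_Y \ekt,
\end{equation*}
which follows from \eqref{def_AD_expl}, \eqref{def_za}, and \eqref{def_ra}. Substituting the source representation $\spr{y,f_k}_Y = \lk^{2\mu+1}\spr{w,e_k}_X$ from \eqref{sourcecond_frames} collapses the factor $1/\lk$ and leaves
\begin{equation*}
  \AD y - \za = \sum_\lknz^\infty r_\alpha(\lk^2)\,\lk^{2\mu}\,\spr{w,e_k}_X\,\ekt.
\end{equation*}

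Next I would apply the dual-frame bound \eqref{Fadj_norm} associated with $\{\ekt\}_{k\in\N}$ to pass to an $\ell_2$ sum, yielding
\begin{equation*}
  \norm{\AD y - \za}_X^2 \leq \frac{1}{B_1}\sum_\lknz^\infty \abs{r_\alpha(\lk^2)}^2 \abs{\lk}^{4\mu}\abs{\spr{w,e_k}_X}^2.
\end{equation*}
The assumption \eqref{cond_ra} applied with $\lambda = \lk^2$ gives $\abs{\lk}^{2\mu}\abs{r_\alpha(\lk^2)} \leq c_\mu \alpha^\mu$, so each term in the sum is controlled by $c_\mu^2 \alpha^{2\mu}\abs{\spr{w,e_k}_X}^2$. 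Factoring this uniform constant out produces
\begin{equation*}
  \norm{\AD y - \za}_X^2 \leq \frac{c_\mu^2 \alpha^{2\mu}}{B_1}\sum_\lknz^\infty \abs{\spr{w,e_k}_X}^2.
\end{equation*}

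Finally, the upper frame bound for $\{e_k\}_{k\in\N}$ given by \eqref{eq_framedef} allows me to extend the sum over all $k$ and estimate it by $B_2 \norm{w}_X^2$, which yields the claimed inequality
\begin{equation*}
  \norm{\AD y - \za}_X^2 \leq \frac{B_2}{B_1}\,c_\mu^2\,\norm{w}_X^2\,\alpha^{2\mu}.
\end{equation*}
There is no real obstacle here: the argument is essentially a bookkeeping exercise, and the only point requiring a bit of care is correctly matching the exponent $2\mu$ arising from $\abs{\lk}^{4\mu}$ with the condition \eqref{cond_ra} evaluated at $\lk^2$ (which is why \eqref{cond_ra} is stated with exponent $\mu$ rather than $2\mu$). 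Everything else is a direct application of the frame and dual-frame bounds already collected in Section~\ref{sect_frames}.
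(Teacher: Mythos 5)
Your argument is correct and coincides with the paper's own proof: the same identity for $\AD y - \za$, the same substitution of the source condition \eqref{sourcecond_frames}, the bound \eqref{Fadj_norm} for the dual frame, condition \eqref{cond_ra} evaluated at $\lambda=\lk^2$, and finally the upper frame bound \eqref{eq_framedef} for $\{e_k\}_{k\in\N}$. Nothing is missing, and your remark about matching the exponent when inserting $\lambda=\lk^2$ is exactly the one point of care in this otherwise routine estimate.
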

\begin{proof}
Let $y \in Y$ be arbitrary but fixed and let \eqref{cond_Picard} hold. Then due to Lemma~\ref{lem_welldef} both $\AD y$ and $\za$ are well-defined. Then due to \eqref{helper_eq_1} there holds
    \begin{equation*}
        \AD y - \za = \sum_\lknz^\infty \frac{r_\alpha(\lk^2) }{\lk} \spr{y,f_k}_Y \ekt \,,
    \end{equation*}
which together with the source condition \eqref{sourcecond_frames} implies
	\begin{equation*}
		\AD y - \za = \sum_\lknz^\infty r_\alpha(\lk^2) \lk^{2\mu} \spr{w,e_k}_X \ekt     \,.
	\end{equation*}
Hence, since $\{e_k\}_{k\in\N}$ forms a frame with bounds $B_1,B_2$, it follows with \eqref{Fadj_norm} that	
	\begin{equation*}
		\norm{\AD y - \za}_X^2 \leq \frac{1}{B_1}  \sum_{\lknz}^\infty 
		\abs{ r_\alpha(\lk^2) \lk^{2\mu}}^2 \abs{\spr{w,e_k}_X}^2 \,.
	\end{equation*}
Together with \eqref{cond_ra} this implies
    \begin{equation*}
		\norm{\AD y - \za}_X^2 
		\overset{\eqref{cond_ra}}{\leq} 
		\frac{1}{B_1}(c_{\mu} \alpha^{\mu})^2 \sum_{\lknz}^\infty 
		\abs{\spr{w,e_k}_X}^2 
		\overset{\eqref{def_frame}}{\leq}
		\frac{B_2}{B_1} (c_{\mu} \alpha^{\mu})^2 \norm{w}_X^2 \,,
	\end{equation*}
which yields the assertion.
\end{proof}

Next, we derive convergence rate estimates also in the case of inexact data.

\begin{theorem}\label{thm_rates_noisy_apriori} Let Assumption~\ref{assumption_main} hold, let $y \in Y$ satisfy \eqref{cond_Picard}, $\yd \in Y$ satisfy \eqref{cond_noise}, and let $\AD y$ and $\zad$ be as in \eqref{def_AD_expl} and \eqref{def_zad}, respectively. Moreover, let $\mu > 0$, $\alpha_0 > 0$, and assume that for all $\alpha \in (0,\alpha_0)$ and $\lambda \in \C$ the function $r_\alpha$ defined in \eqref{def_ra} satisfies \eqref{cond_ra} for some constant $c_{\mu} > 0$. Furthermore, let \eqref{sourcecond_frames} hold and assume that
    \begin{equation}\label{cond_Ga}
        G_\alpha = \LandauO\kl{\alpha^{-1}} \quad \text{as} \quad \alpha \to 0 \,.
    \end{equation}
Then with the a-priori parameter choice rule
    \begin{equation}\label{apriori_choice}
        \alpha \sim \delta^{\frac{2}{2\mu+1}} \,,
    \end{equation}
it follows that
    \begin{equation*}
        \norm{\AD y - \zad}_X = \LandauO\kl{ \delta^{\frac{2\mu}{2\mu + 1}}  } \,.
    \end{equation*}
\end{theorem}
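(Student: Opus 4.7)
The plan is the standard textbook balancing argument for a-priori rates in linear regularization, adapted to the frame decomposition setting. I would split the total error via the triangle inequality into the approximation (bias) error and the data-propagation (variance) error,
\begin{equation*}
\norm{\AD y - \zad}_X \leq \norm{\AD y - \za}_X + \norm{\za - \zad}_X \,,
\end{equation*}
and then bound each summand using exactly the two theorems that were just established.

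For the first term, since the hypotheses of Theorem~\ref{thm_rates_exact_apriori} are available verbatim (Picard condition \eqref{cond_Picard}, source condition \eqref{sourcecond_frames}, and the qualification assumption \eqref{cond_ra} on $r_\alpha$), I would immediately conclude
\begin{equation*}
\norm{\AD y - \za}_X \leq \sqrt{B_2/B_1}\,c_\mu \norm{w}_X \, \alpha^\mu \,.
\end{equation*}
For the second term, Theorem~\ref{thm_stability_noise} combined with the new assumption \eqref{cond_Ga} on the growth of $G_\alpha$ gives
\begin{equation*}
\norm{\za - \zad}_X \leq \delta \sqrt{C C_2/B_1}\,\sqrt{G_\alpha} = \LandauO\kl{\delta\,\alpha^{-1/2}} \,.
\end{equation*}
Adding these, I obtain $\norm{\AD y - \zad}_X = \LandauO(\alpha^\mu) + \LandauO(\delta\,\alpha^{-1/2})$.

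The remaining step is to balance these two contributions. Setting $\alpha^\mu \sim \delta\,\alpha^{-1/2}$ yields $\alpha^{\mu+1/2} \sim \delta$, i.e.\ precisely the a-priori choice \eqref{apriori_choice}, and substituting back one checks
\begin{equation*}
\alpha^\mu \sim \delta^{\frac{2\mu}{2\mu+1}}
\qquad\text{and}\qquad
\delta\,\alpha^{-1/2} \sim \delta\cdot\delta^{-\frac{1}{2\mu+1}} = \delta^{\frac{2\mu}{2\mu+1}} \,,
\end{equation*}
so both contributions decay at the same rate $\LandauO(\delta^{2\mu/(2\mu+1)})$, yielding the claim. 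There is no genuine obstacle here: all of the frame-specific work (in particular the use of \eqref{Fadj_norm} to pass from the $\lt$-coefficient bound to the $X$-norm, and the passage through the source condition \eqref{sourcecond_frames}) has already been absorbed into Theorems~\ref{thm_rates_exact_apriori} and~\ref{thm_stability_noise}. The only mildly subtle point, which I would make explicit, is that the bound \eqref{cond_ra} together with boundedness of $g_\alpha$ is what justifies using the same $\alpha$ in both estimates and, in particular, that the qualification parameter $\mu$ enters the a-priori rule through the exponent $2/(2\mu+1)$.
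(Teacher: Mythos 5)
Your proposal is correct and follows essentially the same route as the paper's own proof: the triangle-inequality split into approximation and propagation errors, invoking Theorem~\ref{thm_rates_exact_apriori} and Theorem~\ref{thm_stability_noise}, then using \eqref{cond_Ga} and balancing via \eqref{apriori_choice}. No gaps.
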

\begin{proof}
Let $y,\yd \in Y$ satisfying \eqref{cond_noise}, \eqref{cond_Picard} be arbitrary but fixed. Then by Lemma~\ref{lem_welldef} it follows that $\za$, $\zad$, and $\AD y$ are well-defined. Furthermore, since there holds
    \begin{equation*}
        \norm{\AD y - \zad}_Y \leq \norm{\AD y - \za}_Y + \norm{\za - \zad}_Y \,,
    \end{equation*}
we can apply Theorem~\ref{thm_stability_noise} and Theorem~\ref{thm_rates_exact_apriori} to obtain
    \begin{equation*}
        \norm{\AD y - \zad}_Y \leq \kl{B_2/B_1}^{1/2} c_{\mu} \norm{w}_X \alpha^{\mu} + \delta \sqrt{C C_2 /B_1} \sqrt{G_\alpha} \,.
    \end{equation*}
Together with \eqref{cond_Ga} this implies that
    \begin{equation*}
        \norm{\AD y - \zad}_Y = \LandauO\kl{ \alpha^{\mu} + \frac{\delta}{\sqrt{\alpha}}  } \,,
    \end{equation*}
which together with the a-priori choice \eqref{apriori_choice} yields
    \begin{equation*}
        \norm{\AD y - \zad}_Y = \LandauO\kl{ \delta^{\frac{2\mu}{2\mu+1}}} \,,
    \end{equation*}
and thus concludes the proof.
\end{proof}

\begin{remark}
Note that \eqref{cond_ra} could be replaced by the more general condition
    \begin{equation*}
		\abs{\lambda}^{\mu} \abs{r_\alpha(\lambda)} \leq \omega_{\mu}(\alpha) \,,
	\end{equation*}
for some function $\omega_{\mu} : (0,\alpha_0) \to \R$. With this, one can derive similar results as in Theorem~\ref{thm_rates_exact_apriori} and Theorem~\ref{thm_rates_noisy_apriori} also under more general source conditions than \eqref{sourcecond_frames}.
\end{remark}

\subsection{A-posteriori Parameter Choice Rule}

In this section we derive convergence rates results under an a-posteriori parameter choice rule similar to the SVD case. More precisely, we consider a variant of the well-known \emph{discrepancy principle}, which defines the regularization parameter $\alpha^\DP(\delta,\yd)$ via
    \begin{equation}\label{discrepancy_SVD}
        \alpha^\DP = \alpha^\DP(\delta,\yd) := \sup\Kl{\alpha > 0 \, \vert \, \norm{A \xad - \yd}_Y \leq \tau^\DP \delta} \,,
    \end{equation}
where the constant $\tau^\DP$ is chosen such that
    \begin{equation*}
        \tau^\DP > \sup\Kl{ \abs{r_\alpha(\lambda) } \, \vert\, \alpha > 0\,, \lambda \in \C }  \,.
    \end{equation*}
Since from the properties of the SVD it follows (after some computation) that
    \begin{equation*}
    \begin{split}
    &\norm{A\xad - \yd}_Y^2 
        =
        \sum_{k=1}^\infty \abs{r_\alpha(\sigma_k^2)\spr{\yd,u_k}_Y}^2 + \norm{(I-Q)\yd}_Y^2 \,,
    \end{split}
    \end{equation*}
where $Q$ is the orthogonal projector onto $\overline{R(A)}$, it follows that \eqref{discrepancy_SVD} is equivalent to
    \begin{equation*}
        \alpha^\DP(\delta,\yd) := \sup\Big\{\alpha > 0 \, \Big\vert \,\sum_{k=1}^\infty \abs{r_\alpha(\sigma_k^2)\spr{\yd,u_k}_Y}^2 + \norm{(I-Q)\yd}_Y^2 \leq (\tau^\DP \delta)^2\Big\} \,.
    \end{equation*}
This motivates the following a-posteriori parameter choice rule for the FD case:

\begin{definition}
Let $\gamma$ be as in \eqref{def_gamma} and let the parameter $\tau > 0$ be such that
    \begin{equation}\label{tau}
        \tau > \sqrt{C_2} \gamma \,,
    \end{equation}
where as before $C_2$ denotes the upper frame bound of $\{ f_k \}_{k\in\N}$. Then we define
    \begin{equation}\label{discrepancy_FD}
        \alpha(\delta,\yd) := \sup\Big\{\alpha > 0 \, \Big\vert \, 
        \sum_\lknz^\infty  \abs{r_\alpha(\lk^2) \spr{\yd,f_k}_Y }^2 \leq (\tau \delta)^2\Big\} \,.
    \end{equation}
In case that $\alpha(\delta,\yd) = + \infty$, $z_{\alpha(\delta,\yd)}^\delta$ is understood in the sense of a limit, i.e., 
    \begin{equation}\label{def_zinfty}
        z_\infty^\delta := \lim_{\alpha \to \infty} z_\alpha^\delta \,.
    \end{equation}
\end{definition}

\begin{remark}
While the SVD is generally not known explicitly and thus mostly used as a theoretical tool, we here assume that the FD is known explicitly. Hence, the sum in \eqref{discrepancy_FD} can be computed and thus our a-posteriori rule can also be used in practice.
\end{remark}

Concerning the well-definedness of our parameter choice rule, we have the following

\begin{lemma}\label{lem_welldef_F}
Let Assumption~\ref{assumption_main} hold, let $y,\yd \in Y$ satisfy \eqref{cond_Picard},\eqref{cond_noise}, respectively, and let the function $\alpha \mapsto g_\alpha(\lambda)$ be continuous from the left for all $\lambda \in \C$. Then the set
    \begin{equation}\label{def_D}
        D := \Big\{\alpha > 0 \, \Big\vert \, 
        \sum_\lknz^\infty  \abs{r_\alpha(\lk^2) \spr{\yd,f_k}_Y }^2 \leq (\tau \delta)^2\Big\}
    \end{equation}
is non-empty, and thus \eqref{discrepancy_FD} yields a well-defined stopping index $\alpha(\delta,\yd)$ in $(0,\infty]$. Furthermore, if $\alpha(\delta,\yd) < \infty$ then the supremum in \eqref{discrepancy_FD} is attained, and thus
    \begin{equation}\label{discr_FD_attained}
        \sum_\lknz^\infty  \abs{r_{\alpha(\delta,\yd)}(\lk^2) \spr{\yd,f_k}_Y }^2 \leq (\tau \delta)^2 \,.
    \end{equation}
Moreover, if additionally $G_\alpha$ as defined in \eqref{def_Ga} satisfies 
    \begin{equation}\label{cond_Ga_bound}
        \exists \, \hat{c} \,\, \forall \, \alpha > 0: \quad G_\alpha \leq \hat{c}/\alpha \,,
    \end{equation}
then it follows that $z_\infty^\delta$ as defined in \eqref{def_zinfty} satisfies $z_\infty^\delta = 0$.
\end{lemma}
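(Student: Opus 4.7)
The lemma splits into three claims which I would prove in the stated order: (i) non-emptiness of the set $D$ defined in \eqref{def_D}, (ii) attainment of the supremum in \eqref{discrepancy_FD} when $\alpha(\delta,\yd)<\infty$, yielding \eqref{discr_FD_attained}, and (iii) $z_\infty^\delta=0$ under the additional bound \eqref{cond_Ga_bound}. In each case the argument reduces to estimating a frame-coefficient series by combining a pointwise or uniform bound on $g_\alpha$ or $r_\alpha$ with the frame upper bound for $\Kl{f_k}_{k\in\N}$ and the dual-frame estimate \eqref{Fadj_norm}.

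For (i), the key observation is that $\sum_\lknz^\infty \abs{r_\alpha(\lk^2)\spr{\yd,f_k}_Y}^2\to 0$ as $\alpha\to 0^+$. Pointwise, \eqref{ass_ga_limit} yields $r_\alpha(\lk^2)=1-\lk^2 g_\alpha(\lk^2)\to 0$ for every $k$ with $\lk\neq 0$; the uniform bound $\abs{r_\alpha(\lk^2)}\leq\gamma$ from \eqref{def_gamma} together with the frame upper bound supplies the summable majorant $\gamma^2\abs{\spr{\yd,f_k}_Y}^2$. Dominated convergence then shows that all sufficiently small $\alpha$ lie in $D$, so $\alpha(\delta,\yd)$ is well-defined in $(0,\infty]$.

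For (ii), set $\alpha^\ast:=\alpha(\delta,\yd)$ and use the definition of supremum to choose $\alpha_n\in D$ with $\alpha_n\nearrow\alpha^\ast$. Left-continuity of $\alpha\mapsto g_\alpha(\lambda)$ transfers to $\alpha\mapsto r_\alpha(\lambda)$, so $r_{\alpha_n}(\lk^2)\to r_{\alpha^\ast}(\lk^2)$ for every $k$. Fatou's lemma applied to the non-negative summands over $\N$ then gives $\sum_\lknz^\infty\abs{r_{\alpha^\ast}(\lk^2)\spr{\yd,f_k}_Y}^2\leq\liminf_n\sum_\lknz^\infty\abs{r_{\alpha_n}(\lk^2)\spr{\yd,f_k}_Y}^2\leq(\tau\delta)^2$, proving \eqref{discr_FD_attained}.

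For (iii), I would bound $\norm{\zad}_X$ directly from \eqref{def_zad} and \eqref{Fadj_norm}, obtaining $\norm{\zad}_X^2\leq B_1^{-1}\sum_\lknz^\infty\abs{\lk g_\alpha(\lk^2)}^2\abs{\spr{\yd,f_k}_Y}^2$. The central trick is the factorisation $\abs{\lk g_\alpha(\lk^2)}^2 = \abs{\lk^2 g_\alpha(\lk^2)}\cdot\abs{g_\alpha(\lk^2)}\leq C G_\alpha$, uniform in $k$ via \eqref{ass_ga_bound} and \eqref{def_Ga}. Pulling this factor out and invoking the frame upper bound for $\Kl{f_k}_{k\in\N}$ yields $\norm{\zad}_X^2\leq (CC_2/B_1)G_\alpha\norm{\yd}_Y^2$; together with \eqref{cond_Ga_bound} this is $\LandauO(1/\alpha)$, so $\zad\to 0$ in $X$ as $\alpha\to\infty$ and hence $z_\infty^\delta=0$. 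I expect step (ii) to be the main obstacle, since it requires combining the one-sided continuity hypothesis with the specific structure of the supremum to transfer the discrepancy inequality from the approximating sequence to the limit $\alpha^\ast$; steps (i) and (iii) are essentially bookkeeping once the summable majorant and the factorisation trick are in hand.
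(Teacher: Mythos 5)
Your proposal is correct and follows essentially the same route as the paper: dominated convergence with the majorant $\gamma^2\abs{\spr{\yd,f_k}_Y}^2$ for non-emptiness, and the factorisation $\abs{\lk g_\alpha(\lk^2)}^2 \leq C\,G_\alpha$ combined with \eqref{Fadj_norm}, the frame bound $C_2$, and \eqref{cond_Ga_bound} for $z_\infty^\delta = 0$. The only (harmless) variation is in the attainment step, where you use a sequence $\alpha_n \in D$ with $\alpha_n \nearrow \alpha(\delta,\yd)$ and Fatou's lemma, whereas the paper deduces left-continuity of the series $\alpha \mapsto \sum_\lknz^\infty \abs{r_\alpha(\lk^2)\spr{\yd,f_k}_Y}^2$ from the pointwise left-continuity of $g_\alpha$ (implicitly via the uniform bound $\gamma$) and concludes the same inequality \eqref{discr_FD_attained}.
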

\begin{proof}
Note first that due to \eqref{def_frame} and \eqref{def_gamma}, for all $\alpha > 0$ there holds
    \begin{equation*}
        \sum_{\lknz}^\infty \abs{r_\alpha(\lk^2)\spr{\yd,f_k}_Y }^2 
        \leq
        \gamma^2 C_2 \norm{\yd}_Y^2 \,.
    \end{equation*}
Hence, we can apply the dominated convergence theorem to obtain
    \begin{equation*}
        \lim_{\alpha \to 0} \sum_{\lknz}^\infty \abs{r_\alpha(\lk^2)\spr{\yd,f_k}_Y }^2
        =
        \sum_{\lknz}^\infty \lim_{\alpha \to 0} \abs{r_\alpha(\lk^2)}^2 \abs{\spr{\yd,f_k}_Y }^2
        \overset{\eqref{ass_ga_limit}}{\underset{\eqref{def_ra}}{=}}
        0\,.
    \end{equation*}
Hence, for each $\eps > 0$ there exists an $\alpha(\eps)$ such that
    \begin{equation*}
        \sum_{\lknz}^\infty \abs{r_{\alpha(\eps)}(\lk^2)\spr{\yd,f_k}_Y }^2 \leq \eps \,,
    \end{equation*}
and thus $D$ defined in \eqref{def_D} is non-empty and consequently $\alpha(\delta,\yd)$ is a well-defined element in $(0,\infty]$. Next, note that since we assumed that for all $\lambda\in\C$ the functional $\alpha \mapsto g_\alpha(\lambda)$ is continuous from the left, it follows that the same is also true for the functional
    \begin{equation*}
        \alpha \mapsto \sum_\lknz^\infty  \abs{r_\alpha(\lk^2) \spr{\yd,f_k}_Y }^2 \,.
    \end{equation*}
Hence, the supremum in \eqref{discrepancy_FD} is attained and thus \eqref{discr_FD_attained} holds whenever $\alpha(\delta,\yd) < \infty$. For the limit case $\alpha(\delta,\yd) = \infty$ observe that due to \eqref{Fadj_norm} and \eqref{def_Ga} there holds
    \begin{equation*}
         \norm{\zad}_X^2  
         \leq 
         \frac{1}{B_1} \sum_\lknz^\infty \abs{\lk g_\alpha(\lk^2) \spr{\yd,f_k}_Y}^2 
         \leq 
         \frac{C G_\alpha}{B_1}  \sum_\lknz^\infty \abs{\spr{\yd,f_k}_Y}^2 \,. 
    \end{equation*}
Hence, together with \eqref{def_frame} and \eqref{cond_Ga_bound} we obtain
    \begin{equation*}
         \norm{\zad}_X^2  
         \leq 
         \frac{C G_\alpha}{B_1}  \sum_\lknz^\infty \abs{\spr{\yd,f_k}_Y}^2
         \leq
        \frac{C_2 C}{B_1} \frac{\hat{c}}{\alpha} \norm{\yd}_Y^2 
         \,,
    \end{equation*}
and thus taking the limit we obtain $z_\infty^\delta = \lim\limits_{\alpha\to \infty} \zad = 0$, which concludes the proof.
\end{proof}

We are now able to derive convergence rate estimates in the presence of noisy data.

\begin{theorem}\label{thm_rates_noisy_aposteriori} Let Assumption~\ref{assumption_main} hold, let $y \in Y$ satisfy \eqref{cond_Picard}, $\yd \in Y$ satisfy \eqref{cond_noise}, and let $\AD y$ and $\zad$ be as in \eqref{def_AD_expl} and \eqref{def_zad}, respectively. Moreover, let $\mu > 0$, $\alpha_0 > 0$, and assume that for all $\alpha \in (0,\alpha_0)$ and $\lambda \in \C$ the function $r_\alpha$ defined in \eqref{def_ra} satisfies 
    \begin{equation}\label{cond_ra_plus}
		\abs{\lambda}^{\mu +1/2} \abs{r_\alpha(\lambda)} \leq c_{\mu+1/2} \alpha^{\mu+1/2} \,,
	\end{equation}
for some constant $c_{\mu+1/2} > 0$. Furthermore, let \eqref{cond_Ga_bound} and the source condition \eqref{sourcecond_frames} hold, and let the function $\alpha \mapsto g_\alpha(\lambda)$ be continuous from the left for all $\lambda \in \C$. Then for $\alpha = \alpha(\delta,\yd)$ chosen via the a-posteriori stopping rule \eqref{tau}, \eqref{discrepancy_FD}, it follows that
    \begin{equation*}
        \norm{\AD y - \zad}_X = \LandauO\kl{ \delta^{\frac{2\mu}{2\mu + 1}}  } \,.
    \end{equation*}
\end{theorem}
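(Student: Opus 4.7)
The plan is to follow the classical triangle-inequality splitting
$\norm{\AD y - \zad}_X \leq \norm{\AD y - \za}_X + \norm{\za - \zad}_X$, handling the data-propagation term by Theorem~\ref{thm_stability_noise} and the approximation term by a Hölder-type interpolation that uses the source condition \eqref{sourcecond_frames}. The a-posteriori choice \eqref{discrepancy_FD} enters in two ways: directly through the attained bound \eqref{discr_FD_attained}, and indirectly through a lower bound $\alpha(\delta,\yd) \geq c\,\delta^{2/(2\mu+1)}$ extracted from the violation of the discrepancy inequality at parameters larger than $\alpha(\delta,\yd)$.

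For the approximation term, inserting \eqref{sourcecond_frames} into \eqref{helper_eq_1} and estimating via \eqref{Fadj_norm} gives
\begin{equation*}
\norm{\AD y - \za}_X^2 \;\leq\; \frac{1}{B_1}\sum_\lknz^\infty \abs{r_\alpha(\lk^2)}^2 \abs{\lk}^{4\mu} \abs{\spr{w,e_k}_X}^2.
\end{equation*}
The crucial step is to decompose each summand with $\theta := 2\mu/(2\mu+1)$ as
\begin{equation*}
\bigl(\abs{r_\alpha(\lk^2)}^2 \abs{\lk}^{2(2\mu+1)} \abs{\spr{w,e_k}_X}^2\bigr)^{\theta} \cdot \bigl(\abs{r_\alpha(\lk^2)}^2 \abs{\spr{w,e_k}_X}^2\bigr)^{1-\theta}
\end{equation*}
and apply Hölder's inequality with conjugate exponents $1/\theta$ and $1/(1-\theta)$. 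Using \eqref{sourcecond_frames} to rewrite $\abs{\lk}^{2(2\mu+1)}\abs{\spr{w,e_k}_X}^2 = \abs{\spr{y,f_k}_Y}^2$, the first Hölder factor becomes $\sum_\lknz^\infty \abs{r_\alpha(\lk^2)\spr{y,f_k}_Y}^2$, which is at most $((\tau + \gamma\sqrt{C_2})\delta)^2$ by a triangle inequality in $\lt(\N)$ combined with \eqref{discr_FD_attained}, \eqref{cond_noise}, the bound $\abs{r_\alpha}\leq\gamma$, and the upper frame bound of $\{f_k\}_{k\in\N}$. The second Hölder factor is bounded by $\gamma^2 B_2 \norm{w}_X^2$ via the upper frame bound of $\{e_k\}_{k\in\N}$. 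Taking square roots yields $\norm{\AD y - \za}_X = \LandauO(\delta^{2\mu/(2\mu+1)})$.

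For the data-propagation term, I derive a lower bound on $\alpha(\delta,\yd)$. By definition of the supremum in \eqref{discrepancy_FD}, for every $\alpha' > \alpha(\delta,\yd)$ the discrepancy fails, that is, $\sum_\lknz^\infty \abs{r_{\alpha'}(\lk^2)\spr{\yd,f_k}_Y}^2 > (\tau\delta)^2$. Splitting $\yd = y + (\yd - y)$ and applying a triangle inequality in $\lt(\N)$, the noise part is bounded by $\gamma\sqrt{C_2}\delta$, while the signal part uses the source condition \eqref{sourcecond_frames} together with the qualification \eqref{cond_ra_plus} evaluated at $\lambda = \lk^2$ (yielding $\abs{\lk}^{2\mu+1}\abs{r_{\alpha'}(\lk^2)} \leq c_{\mu+1/2}(\alpha')^{\mu+1/2}$) to produce the upper bound $c_{\mu+1/2}\sqrt{B_2}\norm{w}_X (\alpha')^{(2\mu+1)/2}$. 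Combining these estimates gives
\begin{equation*}
(\tau - \gamma\sqrt{C_2})\,\delta \;<\; c_{\mu+1/2}\sqrt{B_2}\,\norm{w}_X\,(\alpha')^{(2\mu+1)/2},
\end{equation*}
and since $\tau > \gamma\sqrt{C_2}$ by \eqref{tau} the left-hand side is strictly positive, so passing to the infimum over $\alpha'$ produces the desired lower bound on $\alpha(\delta,\yd)$. Combining this with Theorem~\ref{thm_stability_noise} and \eqref{cond_Ga_bound} yields $\norm{\za - \zad}_X \leq \delta\sqrt{CC_2\hat c /B_1}\,\alpha(\delta,\yd)^{-1/2} = \LandauO(\delta^{1 - 1/(2\mu+1)}) = \LandauO(\delta^{2\mu/(2\mu+1)})$. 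Summing the two bounds yields the claim; the boundary case $\alpha(\delta,\yd) = \infty$, in which $\zad = 0$ by Lemma~\ref{lem_welldef_F}, can be ruled out for $\delta$ sufficiently small by the same violation-of-discrepancy estimate applied in the limit $\alpha' \to \infty$ (assuming $\AD y \neq 0$; otherwise the claim is trivial).

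The main obstacle is the interpolation step in the approximation-error bound: the exponent $\theta = 2\mu/(2\mu+1)$ is dictated by two simultaneous requirements, namely that the first Hölder factor assemble into exactly $\abs{\spr{y,f_k}_Y}^2$ so that the discrepancy inequality \eqref{discr_FD_attained} applies, and that the resulting $\delta$-exponent match the asserted rate. Once this decomposition is in place, the qualification \eqref{cond_ra_plus} is used solely to derive the lower bound on $\alpha(\delta,\yd)$, and the remaining manipulations reduce to standard frame and triangle-inequality estimates.
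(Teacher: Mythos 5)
Your proof is correct and follows essentially the same route as the paper's: the identical triangle-inequality splitting, the same H\"older interpolation with conjugate exponents $2\mu+1$ and $(2\mu+1)/(2\mu)$ (you merely insert the source condition \eqref{sourcecond_frames} before rather than after H\"older), the same lower bound on $\alpha(\delta,\yd)$ obtained from the failure of the discrepancy inequality above the supremum combined with the qualification \eqref{cond_ra_plus}, and the same use of Theorem~\ref{thm_stability_noise} with \eqref{cond_Ga_bound} for the propagation term. The only cosmetic deviations are your $\ell_2$ triangle inequality in place of the paper's $2(a^2+b^2)$ bound, taking an infimum over $\alpha'>\alpha(\delta,\yd)$ instead of evaluating at $2\alpha$, and disposing of the case $\alpha(\delta,\yd)=\infty$ by contradiction (via $\liminf_{\alpha\to\infty}\abs{r_\alpha(\lk^2)}\geq 1$) rather than by showing $\AD y = 0 = z_\infty^\delta$ as the paper does --- the same ingredients in a slightly different order.
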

\begin{proof}
First, assume that there are sequences $\delta_n \to 0$ and $y^{\delta_n} \in Y$ satisfying
    \begin{equation*}
        \norm{y-y^{\delta_n}}_Y \leq \delta_n \,,
    \end{equation*}
such that for all $n$ there holds $\alpha_n:= \alpha(\delta_n,y^{\delta_n}) = \infty$. Then due to \eqref{discrepancy_FD} there holds
    \begin{equation*}
        \sum_\lknz^\infty  \abs{r_{\alpha}(\lk^2) \spr{y^\dn,f_k}_Y }^2
        \leq
        (\tau \dn)^2  \,,
    \end{equation*}
for all $\alpha > 0$.  
    Hence, together with \eqref{def_frame} and the definition \eqref{def_gamma} of $\gamma$ we obtain
    \begin{equation}\label{helper_sum_p1}
    \begin{aligned}
        \sum_\lknz^\infty  \abs{r_\alpha(\lk^2) \spr{y,f_k}_Y }^2 
        &\leq
        2\sum_\lknz^\infty  \abs{r_\alpha(\lk^2) \spr{y-y^\dn,f_k}_Y }^2 
        +
        2\sum_\lknz^\infty  \abs{r_\alpha(\lk^2) \spr{y^\dn,f_k}_Y }^2
        \\
        &\leq
        2\gamma^2 C_2 \norm{y-y^\dn}_Y^2
        +
        2(\tau \dn )^2 
        \leq
        2( \gamma^2 C_2 + \tau^2) \delta_n^2
         \,.
    \end{aligned}
    \end{equation}
Letting $n\to \infty$ in the above inequality we thus obtain that for all $\alpha > 0$ there holds
    \begin{equation*}
        \sum_\lknz^\infty  \abs{r_\alpha(\lk^2) \spr{y,f_k}_Y }^2 = 0 \,.
    \end{equation*}
Hence, using the dominated convergence theorem we obtain
    \begin{equation*}
        0 
        = 
        \lim_{\alpha \to \infty} \sum_\lknz^\infty  \abs{r_\alpha(\lk^2) \spr{y,f_k}_Y }^2 
        =
        \sum_\lknz^\infty \lim_{\alpha \to \infty} \abs{r_\alpha(\lk^2)}^2 \abs{ \spr{y,f_k}_Y }^2 \,.
    \end{equation*}
Now since from the definition \eqref{def_Ga} of $G_\alpha$ and \eqref{cond_Ga_bound} there follows
    \begin{equation*}
        \abs{r_\alpha(\lk^2)} 
        =
        \abs{1-\lk^2 g_\alpha(\lk^2)}
        \geq
        1 - \abs{\lk^2 g_\alpha(\lk^2)}
        \overset{\eqref{def_Ga}}{\geq}
        1- \abs{\lk^2} G_\alpha 
        \overset{\eqref{cond_Ga_bound}}{\geq}
        1 - \abs{\lk^2} \hat{c}/\alpha \,,
    \end{equation*}
we obtain that $\lim_{\alpha \to \infty} \abs{r_\alpha(\lk^2)} \geq 1$ and thus we find that
    \begin{equation*}
        0 
        = 
        \sum_\lknz^\infty \lim_{\alpha \to \infty} \abs{r_\alpha(\lk^2)}^2 \abs{ \spr{y,f_k}_Y }^2
        \geq
        \sum_\lknz^\infty  \abs{ \spr{y,f_k}_Y }^2
        \,.
    \end{equation*}
Since this implies $\spr{y ,f_k}_Y = 0$ for all $\lk\neq 0$, it follows from the definition of $\AD y$ that
    \begin{equation*}
        \AD y = 0 = z_{\alpha_n}^\dn \,.
    \end{equation*}
Hence, for the remainder of this proof we can assume that $\alpha(\delta,\yd) < \infty$ for all $\yd$ satisfying \eqref{cond_noise} with $\delta$ sufficiently small. Next, note that in \eqref{helper_ineq_1} we have shown that
    \begin{equation*}
		\norm{\AD y - \za}_Y^2 
		\leq \frac{1}{B_1}
		\sum_\lknz^\infty \abs{\frac{r_\alpha(\lk^2) }{\lk} \spr{y,f_k}_Y }^2 \,.
	\end{equation*}
Since by the H\"older inequality with $p=2\mu+1$ and $q=(2\mu+1)/(2\mu)$ there follows
    \begin{equation*}
    \begin{split}
        &\sum_\lknz^\infty \abs{r_\alpha(\lk^2) \lk^{-1} \spr{y,f_k}_Y }^2
        =
        \sum_\lknz^\infty \abs{r_\alpha(\lk^2) \lk^{-(2\mu+1)}\spr{y,f_k}_Y}^{ \frac{2}{2\mu+1}}
        \abs{r_\alpha(\lk^2) \spr{y,f_k}_Y }^{\frac{4\mu}{2\mu+1}}
        \\
        & \qquad 
        \leq
        \kl{\sum_\lknz^\infty 
        \abs{r_\alpha(\lk^2) \lk^{-(2\mu+1)}\spr{y,f_k}_Y}^{2} }^{\frac{1}{2\mu+1}}
        \kl{\sum_\lknz^\infty 
        \abs{r_\alpha(\lk^2) \spr{y,f_k}_Y }^{2}}^{\frac{2\mu}{2\mu+1}} \,,
    \end{split}
    \end{equation*}
we thus obtain that    
    \begin{equation}\label{helper_sum_1}
		\norm{\AD y - \za}_Y^2 
		\leq
		\frac{1}{B_1}
		\kl{\sum_\lknz^\infty 
        \abs{r_\alpha(\lk^2) \lk^{-(2\mu+1)}\spr{y,f_k}_Y}^{2} }^{\frac{1}{2\mu+1}}
        \kl{\sum_\lknz^\infty 
        \abs{r_\alpha(\lk^2) \spr{y,f_k}_Y }^{2}}^{\frac{2\mu}{2\mu+1}} \,.
	\end{equation}
We now consider each of these sums separately, where we have already shown in \eqref{helper_sum_p1} that the second sum in \eqref{helper_sum_1} can be estimated by
 \begin{equation*}
        \sum_\lknz^\infty  \abs{r_\alpha(\lk^2) \spr{y,f_k}_Y }^2 
        \leq
        2( \gamma^2 C_2 + \tau^2) \delta^2
         \,.
    \end{equation*}  
Concerning the first sum in \eqref{helper_sum_1}, note that due to the source condition \eqref{sourcecond_frames},
    \begin{equation*}
        \sum_\lknz^\infty 
        \abs{r_\alpha(\lk^2) \lk^{-(2\mu+1)}\spr{y,f_k}_Y}^{2} 
        =
        \sum_\lknz^\infty 
        \abs{r_\alpha(\lk^2) \spr{w,e_k}_X }^{2} \,.
    \end{equation*}
Together with the definition of $\gamma$ and since $\{e_k \}_{k\in\N}$ forms a frame, we obtain
    \begin{equation}\label{helper_sum_p2}
        \sum_\lknz^\infty 
        \abs{r_\alpha(\lk^2) \lk^{-(2\mu+1)}\spr{y,f_k}_Y}^{2} 
        =
        \sum_\lknz^\infty 
        \abs{r_\alpha(\lk^2) \spr{w,e_k}_X }^{2}
        \leq 
        B_2 \gamma^2 \norm{w}_X^2 \,.
    \end{equation}  
Hence, inserting \eqref{helper_sum_p1} and \eqref{helper_sum_p2} into \eqref{helper_sum_1} we obtain
    \begin{equation*}
        \norm{\AD y - \za}_Y^2 
		\leq
		\frac{1}{B_1}
		\kl{B_2 \gamma^2 \norm{w}_X^2 }^{\frac{1}{2\mu+1}}
        \kl{2(C_2\gamma^2 + \tau^2)\delta^2}^{\frac{2\mu}{2\mu+1}} \,,
    \end{equation*}
which simplifies to 
    \begin{equation}\label{helper_est_exact}
        \norm{\AD y - \za}_Y
		\leq
		\kl{ \frac{1}{B_1}		\kl{ B_2 \gamma^2 \norm{w}_X^2 }^{\frac{1}{2\mu+1}}
        \kl{2(C_2\gamma^2 + \tau^2)}^{\frac{2\mu}{2\mu+1}} }^{1/2}
        \delta^{\frac{2\mu}{2\mu+1}}
        \,,
    \end{equation}
and thus provides an (optimal) convergence rate given exact data. Next, note that due to the reverse triangle inequality there holds
    \begin{equation}\label{help_sum_revtriangle}
    \begin{split}
        \kl{\sum_\lknz^\infty  \abs{r_{2\alpha}(\lk^2) \spr{y-\yd,f_k}_Y }^2}^{1/2}
        &\geq
        \kl{\sum_\lknz^\infty  \abs{r_{2\alpha}(\lk^2) \spr{\yd,f_k}_Y }^2}^{1/2}\\
        &\quad-
        \kl{\sum_\lknz^\infty  \abs{r_{2\alpha}(\lk^2) \spr{y,f_k}_Y }^2}^{1/2}.
    \end{split}
    \end{equation}
By the definition \eqref{discrepancy_FD} of our a-posteriori parameter choice rule we obtain
    \begin{equation*}
        \sum_\lknz^\infty  \abs{r_{2\alpha}(\lk^2) \spr{\yd,f_k}_Y }^2 
        > (\tau \delta)^2 \,.
    \end{equation*}
Thus, rearranging \eqref{help_sum_revtriangle}, together with the definition of $\gamma$, \eqref{cond_noise}, and \eqref{eq_bound_F_Fadj} we obtain
    \begin{equation*}
    \begin{split}
        \kl{\sum_\lknz^\infty  \abs{r_{2\alpha}(\lk^2) \spr{y,f_k}_Y }^2 }^{1/2}
        &\geq 
        (\tau \delta) - \gamma \kl{\sum_\lknz^\infty  \abs{\spr{y-\yd,f_k}_Y }^2}^{1/2}
        \\
        &\geq
        (\tau \delta) - \gamma \sqrt{C_2}\norm{y-\yd}_Y
        \geq 
        \kl{\tau - \gamma \sqrt{C_2}} \delta \,.
    \end{split}
    \end{equation*}
Now since due to \eqref{tau} there holds
    \begin{equation*}
        c:= \tau - \gamma \sqrt{C_2} > 0 \,,
    \end{equation*}
it follows that
    \begin{equation*}
        (c \delta)^2 
        \leq 
        \sum_\lknz^\infty  \abs{r_{2\alpha}(\lk^2) \spr{y,f_k}_Y }^2
        =
        \sum_\lknz^\infty  \abs{r_{2\alpha}(\lk^2) \lk^{2\mu+1}}^2 \abs{ \lk^{-(2\mu+1)}\spr{y,f_k}_Y }^2 \,.
    \end{equation*}
Since due to \eqref{cond_ra_plus} and the definition of $\gamma$ there holds
    \begin{equation*}
        \abs{r_{2\alpha}(\lk^2) \lk^{2\mu+1}}^2 
        \leq 
        \kl{c_{\mu+1/2} (2\alpha)^{\mu+1/2}}^2
        =
        c_{\mu+1/2}^2 (2\alpha)^{2\mu+1} \,,
    \end{equation*}
it follows together with the source condition \eqref{sourcecond_frames} that
    \begin{equation*}
        (c \delta)^2 
        \leq 
        c_{\mu+1/2}^2 (2\alpha)^{2\mu+1}
        \sum_\lknz^\infty  \abs{ \lk^{-(2\mu+1)}\spr{y,f_k}_Y }^2
        =
        c_{\mu+1/2}^2 (2\alpha)^{2\mu+1}
        \sum_\lknz^\infty  \abs{ \spr{w,e_k}_X }^2
        \,.
    \end{equation*}
Hence, together with \eqref{def_frame} we obtain
    \begin{equation*}
        (c \delta)^2 \leq c_{\mu+1/2}^2 (2\alpha)^{2\mu+1} B_2 \norm{w}_X^2 \,,
    \end{equation*}
and thus we find that
    \begin{equation}\label{helper_est_alpha}
        \alpha^{-1}   \leq 2 \kl{c_{\mu+1/2} B_2^{1/2} \norm{w}_X /c}^{\frac{2}{2\mu+1}} \delta^{-\frac{2}{2\mu+1}}  \,.
    \end{equation}
Now, note that due to \eqref{helper_ineq_2} and \eqref{cond_Ga_bound} there holds
    \begin{equation}\label{helper_LandauO_1}
        \norm{\AD y - \zad}_Y 
        \overset{\eqref{helper_ineq_2}}{\leq} \norm{\AD y - \za}_X + \delta \sqrt{C G_\alpha C_2/B_1}
        \overset{\eqref{cond_Ga_bound}}{\leq}
        \norm{\AD y - \za}_X + \frac{\delta}{\sqrt{\alpha}} \sqrt{\hat{c}\,C C_2/B_1}  
         \,.
    \end{equation}    
Inserting \eqref{helper_est_exact} and \eqref{helper_est_alpha} into this inequality yields
    \begin{equation*}
    \begin{split}
        \norm{\AD y - \zad}_Y 
        &\leq
        \kl{ \frac{1}{B_1}		\kl{ B_2 \gamma^2 \norm{w}_X^2 }^{\frac{1}{2\mu+1}}
        \kl{2(C_2\gamma^2 + \tau^2)}^{\frac{2\mu}{2\mu+1}} }^{1/2}
        \delta^{\frac{2\mu}{2\mu+1}}
        \\
        &+ 
        \kl{ \sqrt{2} \kl{c_{\mu+1/2} B_2^{1/2} \norm{w}_X /c}^{\frac{1}{2\mu+1}} \sqrt{\hat{c}\,C C_2/B_1}  } \delta \delta^{-\frac{1}{2\mu+1}} 
    \end{split}\,,
    \end{equation*} 
which now yields the assertion.
\end{proof}

\section{Application to the Radon Transform}\label{sect_Radon_Trans}

In this section, we illustrate our theoretical results on continuous regularization via FDs by applying them to a standard tomography problem based on the \emph{Radon transform} \cite{Natterer_2001,Louis_1989}, which in 2D is given by
    \begin{equation}\label{Radon_A}
    \begin{split}
        (Ax)(s,\vphi) :=  
        \int_\R x(s\omega(\vphi) + t \omega(\vphi)^\perp) \, dt \,,
    \end{split}	
    \end{equation}
where $\omega(\vphi) = (\cos(\vphi),\sin(\vphi))^T$ for $\vphi \in [0,2\pi)$ and $s \in \R$. After recalling some recently derived FDs of the Radon transform on which we base the numerical illustration of our theoretical results derived above, we consider the details of the implementation of these decompositions and provide a number of numerical examples. For comparison, we also present numerical results based on the SVD of the Radon transform; cf.~\cite{Louis_1989,Natterer_2001}.

\subsection{Frame Decompositions of the Radon Transform}\label{sect_Radon_Frames}

A number of different decompositions of the Radon transform fitting into the category of frame decompositions have been studied in the past. These include e.g.\ the WVD \cite{Donoho_1995} or the the biorthogonal curvelet/shearlet decompositions \cite{Candes_Donoho_2002,Colonna_Easley_Guo_Labate_2010}, for which also efficient implementations are available. For the numerical examples presented in this paper, which mainly serve to illustrate the theoretical results derived above, we focus on a class of FDs recently introduced in \cite{Hubmer_Ramlau_2021_01}. These are based on a general strategy for deriving FDs for arbitrary bounded linear operators in Hilbert spaces satisfying a stability condition of the form \eqref{cond_A_stability}. In contrast to the classic SVD, available for the setting
    \begin{equation*}
         A: \LtOD\to L_2(\Omega_S,w^{-1}) \,,
         \quad 
         w(s)=\sqrt{1-s^2}\,,
     \end{equation*}
where $\OD :=  \{x \in \R^2 \, \vert \, \abs{x} \leq 1\}$ and $\OS := \R \times [0,2\pi)$, these FDs of the Radon transform are also available for the very general settings
    \begin{equation}\label{Radon_Sobolev}
        A : \HbzOD \to \LtOS \,,
        \qquad
        \text{and}
        \qquad
        A : \HbzOD \to \LtOSb \,,
    \end{equation}
where $\OSb := [-1,1]\times [0,2\pi)$. While we refer to \cite{Hubmer_Ramlau_2021_01} for the most general version of these FDs, here we focus only on two special cases based on wavelets and exponentials.

\begin{theorem}\label{thm_Radon_wavelets}\cite[Thm.~5.2]{Hubmer_Ramlau_2021_01}
Let $0 \leq \beta \in \R$ and let $A : \HbzOD \to \LtOS$ be the Radon transform as defined in \eqref{Radon_A}. Furthermore, let $\{\psi_{j,k}\}_{j,k\in\Z}$ be an orthonormal wavelet basis corresponding to an $r$-regular multiresolution analysis of $\LtR$ with $r > \beta$, let $\{w_l\}_{l\in\N}$ be an orthonormal basis of $\LtT$, and define 
    \begin{equation}\label{wavframes}
	    \fjkl(s,\vphi) := \psi_{j,k}(s) w_l(\vphi) \,,
	    \qquad
	    \text{and}
	    \qquad
	    \ejkl := \kl{1 + 2^{-2j \kl{\beta+1/2}}}^{1/2}A^*\fjkl \,.
	\end{equation}
Then the set $\{\ejkl\}_{j,k\in \Z\,, l \in \N}$ forms a frame over $\HbzOD$ and 
	\begin{equation*}
	    A x = \sum\limits_{j,k \in \Z}\sum\limits_{l=1}^\infty \kl{1 + 2^{-2j \kl{\beta+1/2}}}^{-1/2} \spr{x, \ejkl}_\HbzOD \fjklt 
	    =
	    \sum\limits_{j,k \in \Z}\sum\limits_{l=1}^\infty \spr{x,A^*\fjkl} \fjklt \,.
	\end{equation*}
Furthermore, for any $y \in R(A)$ the unique solution of $Ax=y$ is given by 
	\begin{equation}\label{dec_ADy_Radon_specific}
	    \AD y = \sum\limits_{j,k \in \Z} \sum\limits_{l=1}^\infty \kl{1 + 2^{-2j \kl{\beta+1/2}}}^{1/2}\spr{y,\fjkl}_\LtOS \ejklt \,.
	\end{equation}
\end{theorem}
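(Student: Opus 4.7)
My plan is to reduce the claim to the general recipe presented earlier in the paper around equation \eqref{cond_norm_Z}, which mechanically produces a frame $\{e_k\}_{k\in\N}$ over $X$ satisfying \eqref{cond_frames_connected} out of a frame $\{f_k\}_{k\in\N}$ over $Y$ whose weighted coefficients characterize the norm of some intermediate space $Z \subseteq Y$, provided $A$ satisfies the stability condition \eqref{cond_A_stability} between $X$ and $Z$. The intended choice of spaces is $X = \HbzOD$, $Y = \LtOS$, and $Z = H^{\beta+1/2}(\OS)$, since the Radon transform is classically known (Natterer) to satisfy a two-sided Sobolev estimate of the form $c_1 \norm{x}_\HbzOD \le \norm{Ax}_{H^{\beta+1/2}(\OS)} \le c_2 \norm{x}_\HbzOD$, which is exactly \eqref{cond_A_stability} with this choice of $Z$.

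The first, easy step is to observe that $\fjkl = \psi_{j,k}\otimes w_l$ is an orthonormal basis of $\LtOS \cong \LtR\otimes \LtT$, hence a frame with frame bounds $C_1 = C_2 = 1$ and with dual $\fjklt = \fjkl$. The substantive step is the verification of \eqref{cond_norm_Z} with the coefficients $\alpha_{j,k,l} = (1 + 2^{-2j(\beta+1/2)})^{1/2}$ and $Z = H^{\beta+1/2}(\OS)$. For this I would invoke the standard wavelet characterization of Sobolev norms on $\R$: since $\{\psi_{j,k}\}$ comes from an $r$-regular MRA with $r$ large enough for the relevant smoothness index, one has
    \begin{equation*}
        \norm{g}_{H^s(\R)}^2 \sim \sum_{j,k\in\Z} \bigl(1 + 2^{-2js}\bigr)\abs{\spr{g,\psi_{j,k}}_{\LtR}}^2,
        \qquad s = \beta + 1/2.
    \end{equation*}
Combining this with Parseval in the $\vphi$-variable for $\{w_l\}$, and interpreting $H^{\beta+1/2}(\OS)$ as the mixed-norm space $H^{\beta+1/2}(\R;\LtT)$ (which is legitimate because the Radon transform picks up smoothness only in the radial variable, as reflected in Natterer's estimate), then yields the two-sided bound \eqref{cond_norm_Z}.

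Once these two ingredients are in place, the general recipe delivers at once that $\{\ejkl\}$ as defined in \eqref{wavframes} forms a frame over $\HbzOD$ satisfying \eqref{cond_frames_connected} with $\overline{\lambda_{j,k,l}} = 1/\alpha_{j,k,l}$, i.e.\ $\lambda_{j,k,l} = (1 + 2^{-2j(\beta+1/2)})^{-1/2}$. Expanding $Ax$ directly in the ONB $\{\fjkl\}$ of $\LtOS$ and using $\overline{\lambda_{j,k,l}}\ejkl = A^*\fjkl$ produces both stated forms of the decomposition of $A$. For the representation of $\AD y$, the stability estimate \eqref{cond_A_stability} forces $A$ to be injective with closed range, so for $y \in R(A)$ the equation $Ax = y$ admits a unique solution; by the discussion recalled from \cite{Hubmer_Ramlau_2021_01} in the paragraph after \eqref{cond_A_stability}, this solution coincides with the element $\AD y$ from \eqref{def_AD_expl}, which after substitution of $\lambda_{j,k,l}$ and $\fjklt = \fjkl$ becomes exactly \eqref{dec_ADy_Radon_specific}.

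The main obstacle I anticipate is the Sobolev-characterization step: one must be careful about precisely which $H^{\beta+1/2}(\OS)$-norm convention appears in Natterer's Radon-stability estimate, and to verify that the tensor-product construction with an \emph{arbitrary} orthonormal basis $\{w_l\}$ in the angular direction — on which no regularity is assumed — is still compatible with the relevant norm. This is precisely where the weight $(1 + 2^{-2j(\beta+1/2)})$ originates: the additive $1$ encodes the $L_2$-part of the Sobolev norm, while $2^{-2j(\beta+1/2)}$ encodes its homogeneous, Littlewood--Paley-type contribution in the radial variable.
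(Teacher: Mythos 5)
Your proposal is correct and follows essentially the intended route: the paper does not prove Theorem~\ref{thm_Radon_wavelets} itself but cites \cite{Hubmer_Ramlau_2021_01}, where it is obtained exactly as you outline, namely from Natterer's two-sided estimate \eqref{cond_A_stability} with $Z = H^{\beta+1/2}(\OS)$ (smoothness in the $s$-variable only), the wavelet--Parseval verification of \eqref{cond_norm_Z} with weights $(1+2^{-2j(\beta+1/2)})^{1/2}$ for the tensor basis $\psi_{j,k}\otimes w_l$, and the general recipe $e_k = \alpha_k A^* f_k$ together with the characterization of $\AD y$ as the unique solution for $y \in R(A)$. Two small points to tighten: the wavelet characterization of $H^{\beta+1/2}(\R)$ requires regularity exceeding $\beta+1/2$, so your ``$r$ large enough'' should be made explicit against the stated hypothesis $r>\beta$, and ``closed range'' is neither needed nor true in $Y$ (only in the $Z$-norm) --- injectivity from the lower bound in \eqref{cond_A_stability} is all that is required.
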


A possible choice for the orthonormal basis $\Kl{w_l}_{l\in\N}$ in the above theorem is e.g.,
    \begin{equation*}
         w_l(\vphi):= (1/\sqrt{2\pi}) \exp{(il\vphi)}\,,
    \end{equation*}
which was also used throughout the numerical experiments presented below. 

\begin{theorem}\label{thm_Radon_exp}\cite[Remark~5.2]{Hubmer_Ramlau_2021_01}
Let $0 \leq \beta \in \R$ and let $A : \HbzOD \to \LtOSb$ be the Radon transform as defined in \eqref{Radon_A}. Furthermore, let 
	\begin{equation}\label{expframes}
	    w_{j,k}(s,\vphi) := \frac{1}{2\sqrt{ \pi}}\exp(i j \pi s) \exp(ik \vphi) \,,
	    \qquad
	    \text{and}
	    \qquad
	    v_{j,k} := \kl{1+\abs{j}^2}^{\kl{\beta+1/2}/2} A^* w_{j,k} \,.
	\end{equation} 
Then the set $\{v_{j,k}\}_{j,k\in \Z}$ forms a frame over $\HbzOD$ and 
    \begin{equation*}
        A x = \sum\limits_{j,k \in \Z} \kl{1+\abs{j}^2}^{-\kl{\beta+1/2}/2} \spr{x, v_{j,k}}_\HbzOD \tilde{w}_{j,k} 
	    =
	    \sum\limits_{j,k \in \Z} \spr{x,A^* w_{j,k}} \tilde{w}_{j,k} \,.
    \end{equation*}
Furthermore, for any $y\in R(A)$ the unique solution of $Ax=y$ is given by
	\begin{equation}\label{dec_ADy_Radon_wjk}
	    \AD y = \sum\limits_{j,k\in\Z} \kl{1+\abs{j}^2}^{\kl{\beta+1/2}/2} \spr{y,w_{j,k}}_\LtOSb \tilde{v}_{j,k} \,.
	\end{equation}
\end{theorem}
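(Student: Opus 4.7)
The plan is to derive this statement as a direct specialization of the general construction recipe described in the introduction (following \cite{Hubmer_Ramlau_2021_01}), which manufactures a frame satisfying the coupling condition \eqref{cond_frames_connected} out of any starting frame satisfying \eqref{cond_norm_Z}, provided the operator obeys a stability estimate of the form \eqref{cond_A_stability}. The argument will run parallel to that of Theorem~\ref{thm_Radon_wavelets}, with the exponential system \eqref{expframes} playing the role of the tensor-product wavelet frame used there.

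First I would observe that the system $\{w_{j,k}\}_{j,k\in\Z}$ in \eqref{expframes} is, up to the normalization constant $1/(2\sqrt{\pi})$, the standard tensor-product Fourier basis on $\OSb = [-1,1]\times[0,2\pi)$ with period $2$ in $s$ and $2\pi$ in $\vphi$. Consequently, it is an orthonormal basis, and in particular a tight frame, of $Y := \LtOSb$, with dual frame $\tilde{w}_{j,k} = w_{j,k}$. Second, I would invoke the classical mapping and stability properties of the Radon transform (cf.~\cite{Natterer_2001,Louis_1989,Hubmer_Ramlau_2021_01}), according to which for $\beta \geq 0$ there exists a Sobolev-type subspace $Z \subset Y$ and constants $c_1,c_2>0$ such that
\begin{equation*}
    c_1 \norm{x}_{\HbzOD} \leq \norm{Ax}_Z \leq c_2 \norm{x}_{\HbzOD}\,, \qquad \forall\, x \in \HbzOD\,,
\end{equation*}
where the norm on $Z$ measures $H^{\beta+1/2}$-regularity in the detector variable $s$ and $L_2$-regularity in the angular variable $\vphi$. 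Combining Parseval's identity in $\vphi$ with the Fourier-series characterization of the periodic Sobolev norm in $s$, this norm is equivalent to
\begin{equation*}
    \norm{y}_Z^2 \asymp \sum_{j,k \in \Z} \kl{1+\abs{j}^2}^{\beta+1/2}\abs{\spr{y,w_{j,k}}_\LtOSb}^2\,,
\end{equation*}
which is exactly \eqref{cond_norm_Z} with the coefficients $\ak = \alpha_{j,k} := (1+\abs{j}^2)^{(\beta+1/2)/2}$.

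With these two ingredients in place, the recipe from \cite{Hubmer_Ramlau_2021_01} applies verbatim: the functions $v_{j,k} = \alpha_{j,k} A^* w_{j,k}$ form a frame over $\HbzOD$ satisfying \eqref{cond_frames_connected} with $\overline{\lambda_{j,k}} = 1/\alpha_{j,k} = (1+\abs{j}^2)^{-(\beta+1/2)/2}$, which is real, so that $\lambda_{j,k}$ itself equals this value. The decomposition of $A$ stated in the theorem then follows by expanding $Ax \in Y$ in the orthonormal basis $\{w_{j,k}\}$ and using
\begin{equation*}
    \spr{Ax,w_{j,k}}_Y = \spr{x,A^* w_{j,k}}_{\HbzOD} = \lambda_{j,k}\spr{x,v_{j,k}}_{\HbzOD}\,,
\end{equation*}
while the representation \eqref{dec_ADy_Radon_wjk} for $\AD y$ is obtained directly from the general formula \eqref{def_AD_expl} upon substituting the identified values of $\lambda_{j,k}$; uniqueness of the solution for $y \in R(A)$ is inherited from the injectivity of $A$ implied by the lower bound in the stability estimate.

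The main obstacle, and the only analytically nontrivial step in the whole argument, is the verification of the Sobolev-norm equivalence in the second paragraph: one must select the target space $Z$ in a manner compatible with a periodic extension in the $s$-variable, so that the trigonometric system is a basis for the relevant Sobolev scale, and then identify the Fourier coefficients with the inner products $\spr{y,w_{j,k}}_\LtOSb$. This is a standard but somewhat delicate point on bounded intervals, carried out in detail in \cite{Hubmer_Ramlau_2021_01}, and it is precisely this computation which pins down the weight $(1+\abs{j}^2)^{(\beta+1/2)/2}$ appearing in \eqref{expframes} and \eqref{dec_ADy_Radon_wjk}.
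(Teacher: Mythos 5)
Your proposal is correct and takes essentially the same route as the paper, which imports this result from \cite{Hubmer_Ramlau_2021_01} (Remark~5.2, with the coefficients corrected via its Theorem~5.1): namely, the Sobolev stability estimate \eqref{cond_A_stability} for the Radon transform combined with the Fourier-coefficient characterization \eqref{cond_norm_Z} of the $Z$-norm over $\OSb$, followed by the general recipe $v_{j,k} = \alpha_{j,k} A^* w_{j,k}$ with $\lambda_{j,k} = \kl{1+\abs{j}^2}^{-\kl{\beta+1/2}/2}$ and the identification $\tilde{w}_{j,k} = w_{j,k}$ for the orthonormal exponential system. Deferring the delicate norm-equivalence computation on the bounded strip to the cited reference matches how the paper itself treats it, so no gap remains.
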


Note that the FDs and the corresponding formulas for computing $\AD y$ given in Theorem~\ref{thm_Radon_wavelets} and \ref{thm_Radon_exp} can be efficiently implemented by approximating the involved inner products via fast Fourier and wavelet transforms. While this can entail a loss of accuracy compared to higher order integration methods, the increased efficiency is beneficial for applications with real-time requirements such as atmospheric tomography \cite{Hubmer_Ramlau_2021_01}.

\subsection{Implementation and Computational Aspects}\label{subsect_implementation}

In this section, we consider the implementation of the FDs of the Radon transform presented in Section~\ref{sect_Radon_Frames}, focusing on the relevant special case $\beta = 0$ in \eqref{Radon_Sobolev}, i.e.,
    \begin{equation*}
        A : \LtOD \to \LtOS \,,
        \qquad
        \text{and}
        \qquad
        A : \LtOD \to \LtOSb \,.
    \end{equation*}
The key difference in implementation between the FDs given in Theorem~\ref{thm_Radon_wavelets} and Theorem~\ref{thm_Radon_exp} lies in the different sets of frame functions and their corresponding properties. Note that if the dual frame functions $\ejklt$ or $\tilde{v}_{j,k}$ are pre-computed and stored, then for each right-hand side $y$ the computation of $\AD y$ amounts only to the computation of either the inner products $\spr{y,f_{j,k,l}}_{\LtOS}$ or $\spr{y,w_{j,k}}_\LtOSb$, and a corresponding summation according to \eqref{dec_ADy_Radon_specific} or \eqref{dec_ADy_Radon_wjk}, respectively. As mentioned above, these inner products can be efficiently implemented using fast Fourier and wavelet transforms.

\subsubsection{Problem Discretization and Computational Environment}

For the discretization of the problem we have used the AIR Tools II toolbox \cite{Hansen_2018}, which is based on a piecewise constant discretization of both the definition and the image space of $A$. More precisely, a density function $x \in \LtOD$ is approximated by a piecewise constant function with values given on a uniform $N \times N$ pixel grid. Similarly, sinogram data $y$ are also considered as piecewise constant functions on a uniform $p \times N_\theta$ pixel grid, where $p$ denotes the number of equidistant, parallel lines on $[-1,1]$, and $N_\theta$ denotes the number of different angles $\theta_n$. Hence, the discretized problem can be written as 
    \begin{equation*}
        \Af  \xf = \yf \,,
        \qquad 
        \xf \in \R^{N^2}\,,\, 
        \yf \in \R^{pN_\theta}\,, \, 
        \Af \in \R^{N^2\times pN_\theta} \,.
    \end{equation*}
For all tests presented below, we have used the choice $N=p=60$, and $N_\theta=180$, with uniformly spaced angles $\theta_n= n \pi/180$ for $n=0\,,\dots\,,179$, which due to  $Ax(s,\vphi)=Ax(-s,\vphi+\pi)$ amounts to a full-angle tomography problem. The infinite sums in formulas \eqref{dec_ADy_Radon_specific} and \eqref{dec_ADy_Radon_wjk} for computing $\AD y$ have been replaced by finite sums as detailed below, and the involved integrals have been approximated using the trapezoidal rule. All computations have been performed using Matlab 2019b on a desktop computer running on Windows 10 with a 4 core processor
(Intel Core i5-6500 CPU@3.20GHz) and 16GB RAM, except the computation of the dual frames, which is discussed in detail below, and which was performed on the high performance computing cluster Radon1 \cite{Radon1specs}.

\subsubsection{Computation of the Dual Frame Functions} 

Next, we consider the computation of the dual frame functions $\ekt$. While it is generally not possible to give explicit expressions, one can use the recursive approximation \cite{Daubechies_1992}
    \begin{equation}\label{duals_iter}
        \tilde{e}_k\approx\tilde{e}_k^M=\frac{2}{B_1+B_2}e_k+ \kl{I - \frac{2}{B_1+B_2} S}\tilde{e}_k^{M-1},
    \end{equation}
where $B_1,B_2$ denote the frame bound of $\{e_k\}_{k\in\N}$ and $S$ is as in \eqref{Def_S}. The approximation error can be estimated by \cite{Daubechies_1992}: 
    \begin{equation}\label{eq_dual_approx_error}
        \norm{x - \sum\limits_{k=1}^\infty \spr{x,e_k}\et_k^M}_X \leq \kl{\frac{B_2-B_1}{B_2+B_1}}^{M+1}\norm{x}_X\,.
    \end{equation}
Hence, if the frame bounds $B_1$ and $B_2$ are close to each other only few iterations are necessary for obtaining an accurate approximation. Unfortunately, for the frames $\{e_{j,k,l}\}_{j,k\in\Z,l\in\N}$ and $\{v_{j,k}\}_{j,k\in\Z}$ in Theorem~\ref{thm_Radon_wavelets} and Theorem~\ref{thm_Radon_exp}, respectively, the frame bounds depend on constants of a norm-equality estimate like \eqref{cond_norm_Z} which are not known exactly. Hence, for computing an approximation of the dual frame functions via \eqref{duals_iter} a value for $B_1+B_2$ has to be chosen empirically. On the one hand, one has to choose this value large enough such that the recursion converges, while on the other hand a too large value results in a very slow convergence. For our numerical experiments, we found a value of $B_1+B_2=500$ to lead to satisfactory results within $M=50$ iterations.

Alternatively, one can compute the dual frame functions $\ekt$ directly via their definition $\ekt = S^{-1} e_k$. Discretizing as above, one has to solve the system of equations 
    \begin{equation*}
        \Sf \ektf= \ekf\,,
        \qquad
        \ektf \in \R^{N^2}\,,\, 
        \ekf \in \R^{N^2}\,, \, 
        \Sf \in \R^{N^2\times N^2} \,,
    \end{equation*}
where $\ektf$ and $\ekf$ are piecewise constant discretizations of $\ekt$ and $e_k$, respectively, and
     \begin{equation*}
        \Sf_{ij}=\left(\sum_{k=1}^{p N_\theta}\spr{\phi_i,e_k}\ekf\right)_j \,,
    \end{equation*}
where $\Kl{\phi_i}_{i\in N^2}$ denotes the piecewise constant pixel basis on $\OD$ used for discretization.
  
\begin{figure}[ht!]
    \centering
    \includegraphics[width=\textwidth]{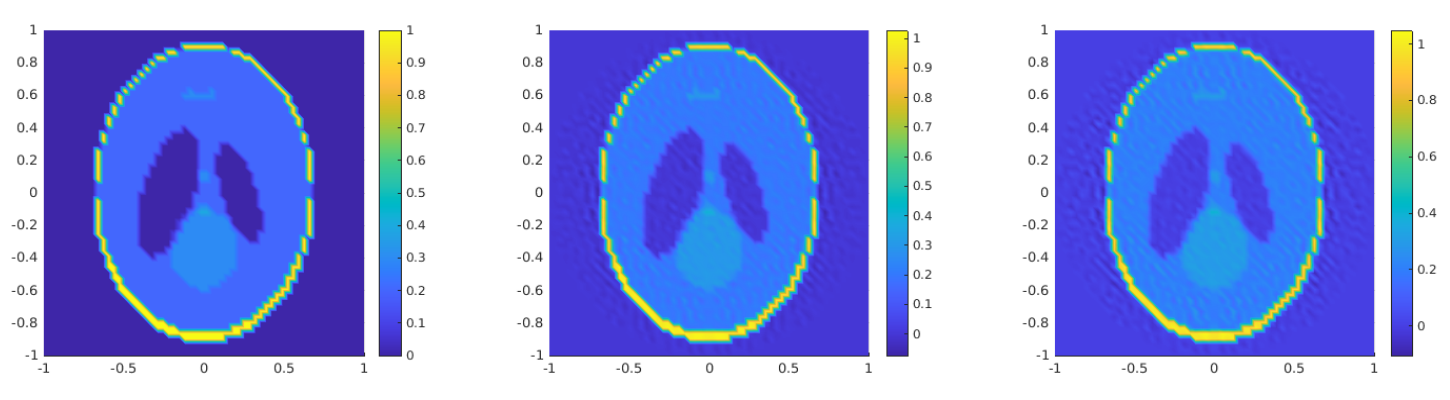}
    \caption{Shepp-Logan phantom (left) and its expansion \eqref{eq_frame_rec} in terms of the frame $\Kl{v_{j,k}}_{j,k\in\Z}$, with the functions $v_{j,k}$ as defined in \eqref{expframes} using the explicitly (middle) and the recursively (right) computed dual frames. Note that these computations are based on a coarser discretization with $N_\theta = 90$ instead of $N_\theta = 180$ as explained in the text.}
    \label{fig_reconstruction}
\end{figure}

Using an LU-decomposition for the matrix $\Sf$, the equation $\Sf \ektf =\ekf$ can be solved efficiently for all $k$. However, our matrices $\Sf$ are ill-conditioned, with condition-numbers
    \begin{equation*}
        \kappa_{wav}=\frac{\lambda_{max}}{\lambda_{min}} \approx 7.7\cdot 10^5 \,,
        \qquad \text{and} \qquad 
        \kappa_{exp}=\frac{\lambda_{max}}{\lambda_{min}} \approx 1.5\cdot 10^5 \,, 
    \end{equation*}
where $\lambda_{max}$ and $\lambda_{min}$ denote the maximal and minimal singular value of $\Sf$, respectively. Hence, to obtain stable approximations of $\ektf$ we used Tikhonov regularization, i.e., 
    \begin{equation}\label{duals_explicit}
        \ektf \approx (\Sf^T \Sf +\alpha I)^{-1} \ekf \,.
    \end{equation}
We found optimal results for $\alpha=0.01$ for the wavelet-based FD and $\alpha=2$ for the exponential-based FD. For these particular choices of $\alpha$, this explicit approach outperformed the recursive approximation. The computed dual frame functions were verified by implementing the reconstruction formula \eqref{eq_frame_rec}, an example of which is shown in Figure~\ref{fig_reconstruction}. Note that for obtaining these results, a coarser discretization of $N_\theta=90$ had to be used in the numerical computation of the frame functions $v_{j,k}$ defined via \eqref{expframes}, since the computation of the recursively approximated dual frames $\tilde{v}_{j,k}$ already takes about $8$ hours in this setup, while the full setup with $N_\theta = 180$ angles is computationally infeasible. The reconstructions shown in Figure~\ref{fig_reconstruction} have a relative error of $6.98\%$ in the case of explicitly computed dual frames, and an error of $9.07\%$ in the case of recursively approximated dual frames. Thus, not only is the explicit computation approach faster than the recursive approach, but it also outperforms it in terms of reconstruction quality. Hence, all numerical results presented below are based on the explicitly computed dual frame functions using the full setup with $N_\theta = 180$ angels.

\subsubsection{Implementation of the Wavelet-based Frame Decomposition}

\begin{figure}[ht!]
    \centering
    \includegraphics[width=\textwidth]{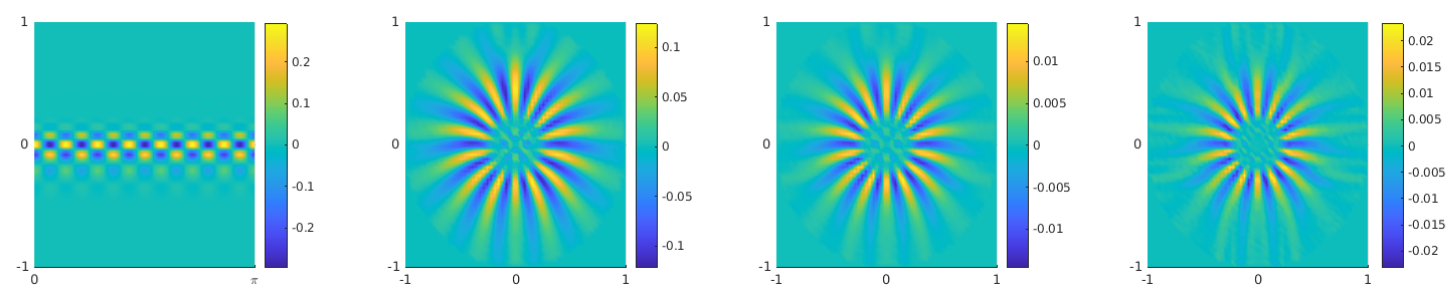}
    \includegraphics[width=\textwidth]{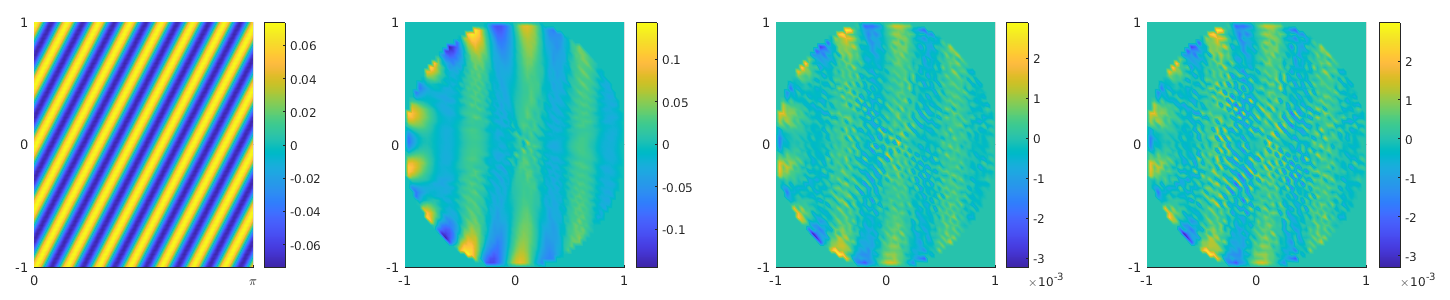}
    \caption{Example frame functions based on wavelets (top, $j=2$, $k=4$, $l=7$) and exponential functions (bottom, $j=4$, $k=7$). From left to right: $f_{k}$, $e_{k}$, $\ekt^M$ computed with the recursive formula \eqref{duals_iter} and $\ekt$ computed explicitly via \eqref{duals_explicit}.}
    \label{fig_frames}
\end{figure}

We now consider the setup from Theorem~\ref{thm_Radon_wavelets}. In particular, for $\Kl{\psi_{j,k}}_{j,k\in\Z}$ we consider inhomogeneous orthonormal wavelet bases of the form $\{\psi^1_{j_{max},k}\}_{k\in\Z} \cap \{\psi^2_{j,k}\}_{j\leq j_{max},k\in \Z}$. These are defined via $\psi^1_{j,k}(s)=2^{-j/2} \psi^1(2^{-j} s - k)$ and $\psi^2_{j,k}(s)=2^{-j/2} \psi^2(2^{-j} s - k)$ based on suitable scaling- and wavelet function $\psi^1$ and $\psi^2$, respectively (cf.~\cite{Daubechies_1992} for details). A typical example of such wavelet bases are the different Daubechies wavelets \cite{Daubechies_1992}, for which the  corresponding scaling- and wavelet functions are available in Matlab. Using such an inhomogeneous wavelet basis, any $x\in L_2(\R)$ can be written in the form \cite{Daubechies_1992}:
    \begin{equation*}
        x=\sum_{k\in\Z} \spr{x,\psi_{j_{max},k}^1}_\LtR \psi_{j_{max},k}^1+\sum_{j=-\infty}^{j_{max}}\sum_{k\in\Z} \spr{x,\psi_{j,k}^2}_{\LtR}\psi_{j,k}^2 \,.
    \end{equation*}
In our implementation, we used the `db4' Daubechies-wavelet basis \cite{Daubechies_1992}, characterized by having $4$ vanishing moments. Using this setting, the expression \eqref{dec_ADy_Radon_specific} for $\AD y$ becomes 
    \begin{equation}\label{num1_FDwavexact}
        \AD y= \sum\limits_{j=-\infty}^{j_{max}+1}
         \sum\limits_{k\in \Z}
         \sum\limits_{l=1}^\infty \frac{1}{\alpha_j}\spr{y,\fjkl}_\LtOS \ejklt \,,
    \end{equation}
where
    \begin{equation*}
        \alpha_j=\begin{cases}
            \kl{1 + 2^{-j_{max}}}^{-1/2}, &j=j_{max}+1 \,,
            \\
            \kl{1 + 2^{-j}}^{-1/2}, & j\leq j_{max} \,,
        \end{cases}
    \end{equation*}
and
    \begin{equation*}
        \fjkl(s,\theta)=
        \begin{cases}
            \psi^1_{j_{max},k}(s)\exp{(il\theta)}/\sqrt{2\pi} \,, \quad  &j= j_{max}+1 \,.
            \\
            \psi^2_{j,k}(s)\exp{(il\theta)}/\sqrt{2\pi} \,, \quad  & j\leq j_{max} \,. 
        \end{cases}
    \end{equation*}
For computation, we replaced the infinite sums in \eqref{num1_FDwavexact} by
    \begin{equation}\label{num2_FDwavexact}
        \AD y
        \approx 
        \sum\limits_{j=j_{min}}^{j_{max}+1}
         \sum\limits_{k\in K(j)}
         \sum\limits_{l=1}^L \frac{1}{\alpha_j}\spr{y,\fjkl}_\LtOS \ejklt 
    \end{equation}
where $K(j): =\{k\in\Z:\supp{\psi^1_{j,k}}\cap [-1,1]\}$ for $j\leq j_{max}$, $K(j_{max}+1) := K(j_{max})$. We used $j_{min}=0$, $j_{max}=3$, and $L=N_\theta$. Analogously, for the approximate solutions $\zad$ we use
    \begin{equation}\label{num_FDwavnoise}
    \begin{aligned}
        z_\alpha^\delta\approx &\sum\limits_{j=j_{min}}^{j_{max}+1}
         \sum\limits_{k\in K(j)}
         \sum\limits_{l=1}^L \alpha_j g_\alpha(\alpha_j^2)\spr{y,\fjkl}_\LtOS \ejklt  \,,
    \end{aligned}
    \end{equation}
where for the function $g_\alpha$ we use the filter functions of Tikhonov regularization, Landweber iteration, and the truncated SVD as defined in \eqref{filters}, respectively.

\begin{remark}
Note that since due to \eqref{cond_frames_connected} for all $x \in \LtOD$ there holds
    \begin{equation*}
        \alpha_j\spr{x,e_{j,k,l}}_\LtOD = \spr{Ax,f_{j,k,l}}_\LtOS \,,
    \end{equation*}
in the the computation of the dual frame functions $\ejklt$ the computation of the inner products $\spr{x,e_{j,k,l}}_\LtOD$ can be replaced by the computation of the inner products $\spr{Ax,f_{j,k,l}}_\LtOS$. This is advantageous, since the support of $f_{j,k,l}$ is known explicitly (cf.~Figure~\ref{fig_frames}) and thus the integration domain can be restricted. 
\end{remark}

\subsubsection{Implementation of the Exponential-based Frame Decomposition}

For the setup as in Theorem~\ref{thm_Radon_exp} we replace the infinite sum in \eqref{dec_ADy_Radon_wjk} by
    \begin{equation}\label{num_FDexpexact}
	    \AD y \approx \sum\limits_{j=j_{min}}^{j_{max}}\sum\limits_{k=1}^{N_\theta} \frac{1}{\alpha_j}\spr{y,w_{j,k}}_\LtOS \tilde{v}_{j,k} \,,
    \end{equation}
using $j_{min}=-\lfloor p/2 \rfloor=-30$, $j_{max}=\lceil p/2 \rceil-1=29$, and $\alpha_j=\kl{1+\abs{j}^2}^{-1/4}$, which follows from our choice of $\beta = 0$. Analogously, for the approximate solutions $\zad$ we use
    \begin{equation}\label{num_FDexpnoise}
	    \zad \approx \sum\limits_{j=j_{min}}^{j_{max}}\sum\limits_{k=1}^{N_\theta} \alpha_j g_\alpha(\alpha_j^2)\spr{y,w_{j,k}}_\LtOS \tilde{v}_{j,k} \,,
    \end{equation}
with the filter functions of Tikhonov regularization and Landweber iteration as above.

\subsection{Numerical Results}\label{subsect_num_results}

In the following, we present some numerical results for the FDs of the Radon transform discussed above. The numerical experiments involve two regularization methods combined with an a-priori and an a-posteriori parameter choice rule for different noise levels. We also present equivalent results based on the SVD of the Radon transform, in order to compare the performance and behaviour of these methods. Note that since the SVD is a special case of the FD, the parameter choice rules discussed in Section~\ref{sect_Regularizations} can be used for both. The truncation value for the indices in the SVD-implementation was chosen such that the number of singular functions coincides with the number of frame functions in the exponential-based setup.

For our numerical experiments we use the Shepp-Logan phantom $x_{SL}$ depicted in Figure~\ref{fig_reconstruction} (left) as the exact solution, i.e., $\xD := x_{SL}$. For measuring the quality of the obtained reconstructions, we use both the relative $L_2$-error $\norm{z_\alpha^\delta-x_{SL}}/\norm{x_{SL}}$ and the structural similarity index measure (SSIM), which is defined in \cite{Wang_Bovik_Sheikh_Simoncelli_2004}. The SSIM is a value in $[0,1]$, with higher values indicating a stronger structural similarity. 

\begin{figure}[ht!]
    \centering
    \includegraphics[trim=0 40 0 0, clip,width=0.95\textwidth]{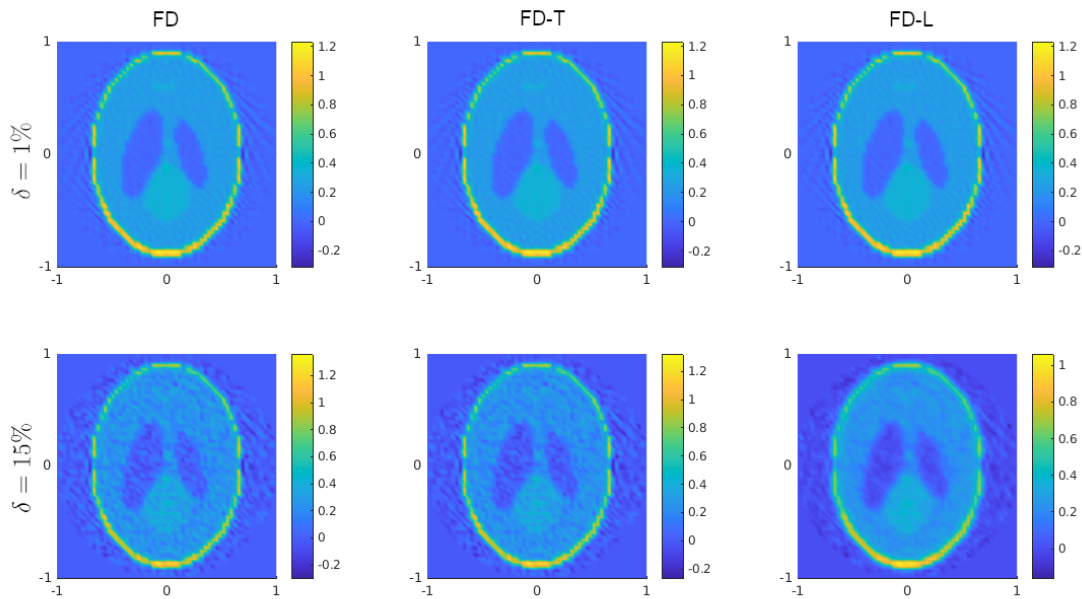}
    \caption{Reconstruction of the Shepp-Logan phantom using the SVD for the Radon transform using an a-priori parameter choice rule for different noise levels.}
    \label{fig_svdapriori}
\end{figure}

\begin{figure}[ht!]
    \centering
    \includegraphics[trim=0 40 0 0, clip,width=0.95\textwidth]{fig_wav_tomo_ap.png}
    \caption{Reconstruction of the Shepp-Logan phantom using the FD for the Radon transform based on wavelet frames \eqref{num_FDwavnoise} using an a-priori parameter choice rule.}
    \label{fig_Wavapriori}
\end{figure}

\begin{figure}[ht!]
    \centering
    \includegraphics[trim=0 40 0 0, clip,width=0.95\textwidth]{fig_exp_tomo_ap.png}
    \caption{Reconstruction of the Shepp-Logan phantom using the FD for the Radon transform based on exponential frames \eqref{num_FDexpnoise} using an a-priori parameter choice rule.}
    \label{fig_expapriori}
\end{figure}

\begin{table}
    \centering
    \begin{tabular}{|c|r||r|r|r|r|r|}
    \hline
    \multicolumn{3}{|c|}{stopping rule:}&\multicolumn{2}{|c|}{\textbf{a-priori}}&\multicolumn{2}{|c|}{\textbf{a-posteriori}}
    \\ \hline
        \multicolumn{2}{|c|}{Filter function:} & none    & Tikh. & Landw.    & Tikh. & Landw.\\ \hline
    \multicolumn{2}{|c|}{}&\multicolumn{5}{|c|}{Relative $L_2$-error} \\ \hline
\hline
    \multirow{2}{*}{SVD}      & 1\% Noise       & 18.53\% & 18.53\%  & 18.54\%   & 18.53\%  & 18.56\%  \\ \cline{2-2} 
                              & 15\% Noise      & 20.91\% & 20.75\%  & 27.92\%   & 20.81\%  & 21.30\%  \\ \hline
    \multirow{2}{*}{Wav}      & 1\% Noise       &  2.42\% &  2.51\%  &  2.42\%   &  2.45\%  &  2.56\%  \\ \cline{2-2} 
                              & 15\% Noise      & 25.43\% & 23.00\%  & 25.43\%   & 24.56\%  & 24.41\%  \\ \hline
    \multirow{2}{*}{Exp}      & 1\% Noise       & 2.54\%  & 5.18\%   & 2.54\%    & 2.72\%   & 2.88\%   \\ \cline{2-2} 
                              & 15\% Noise      & 27.39\% & 27.82\%  & 24.11\%   & 23.69\%  & 23.10\%  \\ \hline
    \hline
    \multicolumn{2}{|c|}{}&\multicolumn{5}{|c|}{SSIM} \\ \hline
    \hline
    \multirow{2}{*}{SVD}      & 1\% Noise       & 0.84    & 0.84     & 0.84       & 0.84     & 0.84
    \\ \cline{2-2} 
                              & 15\% Noise      & 0.61    & 0.62     & 0.68       & 0.63     & 0.64   \\ \hline
    \multirow{2}{*}{Wav}      & 1\% Noise       & 0.98    & 0.98     & 0.98       & 0.98     & 0.98   \\ \cline{2-2} 
                              & 15\% Noise      & 0.49    & 0.50     & 0.49       & 0.49     & 0.49   \\ \hline
    \multirow{2}{*}{Exp}      & 1\% Noise       & 0.97    & 0.97     & 0.97       & 0.97     & 0.97   \\ \cline{2-2} 
                              & 15\% Noise      & 0.48    & 0.57     & 0.59       & 0.51     & 0.51   \\ \hline
    
    \end{tabular}
    \caption{Error values for the numerical experiments with the Shepp-Logan phantom.}
    \label{table_errorapriori}
\end{table}

Figure~\ref{fig_svdapriori},~\ref{fig_Wavapriori}, and~\ref{fig_expapriori} depict several reconstruction results using the a-priori parameter choice rule $\alpha=0.5\,\delta$. The results presented in these figures are structured as follows: First column: FD without additional regularization (FD), second column: FD with Tikhonov regularization (FD-T), third column: FD with Landweber iteration (FD-L). Top row: $1\%$ relative noise, bottom row: $15\%$ relative noise. The corresponding error measures are collected in Table~\ref{table_errorapriori}, which also includes results for our a-posteriori parameter choice rule \eqref{discrepancy_FD}. However, since these results are visually very similar to those obtained with the a-priori parameter choice rule, we decided not to include them in this paper. Note that since in our experiments the frame functions $f_k$ are orthonormal, there holds $C_2=1$, and thus \eqref{tau} provides a computable lower bound for $\tau$ in our a-priori parameter choice rule \eqref{discrepancy_FD}. However, we empirically found the choice $\tau=0.1$ for the SVD case and $\tau=20$ for the FD cases to lead to much better results, and thus used them in all of the presented numerical experiments.

Comparing the obtained results as summarized in Table~\ref{table_errorapriori}, we see that for a relative noise level of $1\%$, additional regularization beyond the truncation inherent in the discretization is only beneficial for the SVD case. However, for a noise level of $15\%$ noise, additional regularization via the Tikhonov and Landweber filter functions is often beneficial, which can be seen from the error measures, in particular from the SSIM. It appears that the additional regularization has the most impact on the SVD, while having less impact on the exponential- and the wavelet-based FDs. This can be explained by the fact that for the chosen range of the indices $j$ we have $\alpha_j\in[0.71,0.94]$ for the wavelet-based FD and $\alpha_j\in[0.15,1]$ for the exponential-based FD. In contrast, the singular values $\sigma_m$ of the SVD lie within the interval $[0.27, 3.54]$ in our chosen range of indices $m$. However, note that since these index ranges were chosen such that in both the SVD and the exponential-based FD case the same number of singular/frame functions are used, we find that the FDs lead to more accurate reconstructions in the case of low noise levels than those obtained via the SVD, while the SVD shows better stability and regularization properties in the case of high noise levels, at a comparable computational cost.

\begin{figure}[ht!]
    \centering
    \includegraphics[width=0.8\textwidth]{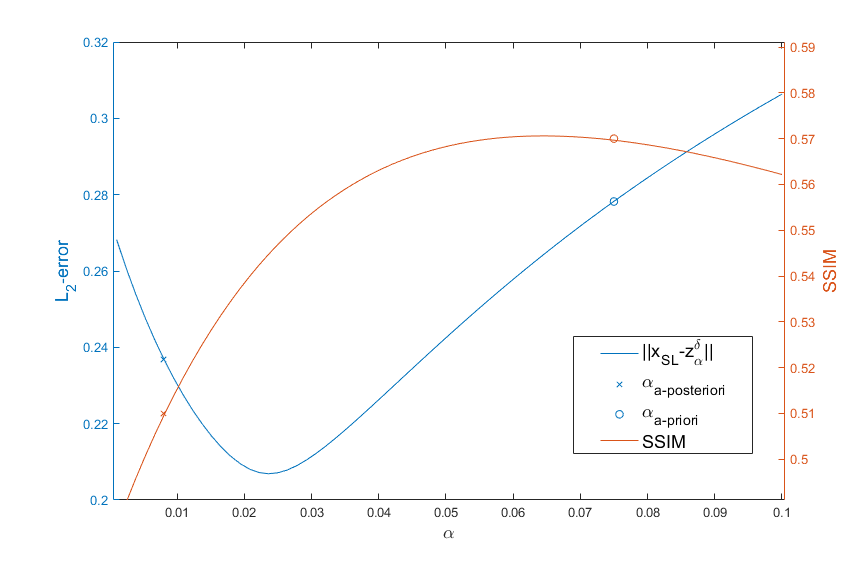}
    \caption{Error-plot for the exponential-based FD \eqref{num_FDexpnoise} with Tikhonov filter reconstructing the Shepp-Logan phantom with $15\%$ noise. The marks show the different parameter choices.The minimal possible error is obtained at $\alpha=0.023$ and yields an error of $20.69\%$. The maximal SSIM of 0.57 is obtained at $\alpha=0.061$}
    \label{fig_stoppruleserror}
\end{figure}

Finally, Figure~\ref{fig_stoppruleserror} illustrates the dependence of the error-measures on the regularization parameter. The marks indicate the parameters selected by the a-priori and a-posteriori parameter choice rules. In particular, note that the optimal (maximal) value of the SSIM is reached at a larger value of $\alpha$ than the optimal (minimal) $L_2$-error.

\section{Conclusion}\label{sect_conclusion}

In this paper, we considered general continuous regularization methods based on FDs for linear ill-posed problems in Hilbert spaces. In particular, we proved convergence and convergence rates results under a-priori and a-posteriori parameter choice rules analogous to those for SVD-based regularization methods. Furthermore, we applied our results to a standard tomography problem based on the Radon transform, using specific FDs based on wavelets and exponential functions. The obtained results demonstrate that FDs are a viable approach for efficiently solving linear ill-posed problems.

\section{Support}

SH and RR were funded by the Austrian Science Fund (FWF): F6805-N36. LW was supported by the strategic program ``Innovatives O\"O 2010 plus" by the Upper Austrian Government and by the Austrian Science Fund (FWF): W1214-N15, project DK8

\bibliographystyle{plain}
{\footnotesize
\bibliography{mybib}

\begin{thebibliography}{10}

\bibitem{Abramovich_Silverman_1998}
F.~Abramovich and B.~W. Silverman.
\newblock {Wavelet Decomposition Approaches to Statistical Inverse Problems}.
\newblock {\em Biometrika}, 85(1):115--129, 1998.

\bibitem{Candes_Donoho_2002}
E.~J. Candes and D.~L. Donoho.
\newblock Recovering edges in ill-posed inverse problems: optimality of
  curvelet frames.
\newblock {\em Ann. Statist.}, 30(3):784--842, 2002.

\bibitem{Christensen_2016}
O.~Christensen.
\newblock {\em {An Introduction to Frames and Riesz Bases}}.
\newblock Applied and Numerical Harmonic Analysis. Springer International
  Publishing, 2016.

\bibitem{Colonna_Easley_Guo_Labate_2010}
F.~Colonna, G.~Easley, K.~Guo, and D.~Labate.
\newblock Radon transform inversion using the shearlet representation.
\newblock {\em Applied and Computational Harmonic Analysis}, 29(2):232--250,
  2010.

\bibitem{Daubechies_1992}
I.~Daubechies.
\newblock {\em Ten Lectures on Wavelets}.
\newblock Society for Industrial and Applied Mathematics, Philadelphia, PA,
  1992.

\bibitem{Dicken_Maass_1996}
V.~Dicken and P.~Maass.
\newblock Wavelet-{G}alerkin methods for ill-posed problems.
\newblock {\em J. Inverse Ill-Posed Probl.}, 4(3):203--221, 1996.

\bibitem{Donoho_1995}
D.~L. Donoho.
\newblock {Nonlinear Solution of Linear Inverse Problems by
  Wavelet–Vaguelette Decomposition}.
\newblock {\em Applied and Computational Harmonic Analysis}, 2(2):101--126,
  1995.

\bibitem{Ebner_Frikel_Lorenz_Schwab_Haltmeier_2020}
A.~Ebner, J.~Frikel, D.~Lorenz, J.~Schwab, and M.~Haltmeier.
\newblock Regularization of inverse problems by filtered diagonal frame
  decomposition.
\newblock {\em ArXiv preprint}, August 2020.

\bibitem{Engl_1997}
H.~W. {Engl}.
\newblock {\em {Integralgleichungen.}}
\newblock Wien: Springer, 1997.

\bibitem{Engl_Hanke_Neubauer_1996}
H.~W. {Engl}, M.~{Hanke}, and A.~{Neubauer}.
\newblock {\em {Regularization of inverse problems.}}
\newblock Dordrecht: Kluwer Academic Publishers, 1996.

\bibitem{Frikel_2013}
J.~Frikel.
\newblock Sparse regularization in limited angle tomography.
\newblock {\em Applied and Computational Harmonic Analysis}, 34(1):117--141,
  2013.

\bibitem{Frikel_Haltmeier_2018}
J.~Frikel and M.~Haltmeier.
\newblock Efficient regularization with wavelet sparsity constraints in
  photoacoustic tomography.
\newblock {\em Inverse Problems}, 34(2):024006, 2018.

\bibitem{Frikel_Haltmeier_2020}
J.~Frikel and M.~Haltmeier.
\newblock {Sparse Regularization of Inverse Problems by Operator-Adapted Frame
  Thresholding}.
\newblock In W.~D{\"o}rfler, M.~Hochbruck, D.~Hundertmark, W.~Reichel,
  A.~Rieder, R.~Schnaubelt, and B.~Sch{\"o}rkhuber, editors, {\em Mathematics
  of Wave Phenomena}, pages 163--178, Cham, 2020. Springer International
  Publishing.

\bibitem{Hansen_2018}
P.~C. Hansen and J.~S. J{\o}rgensen.
\newblock {AIR Tools II: algebraic iterative reconstruction methods, improved
  implementation}.
\newblock {\em Numerical Algorithms}, 79(1):107--137, 2018.

\bibitem{Hubmer_Ramlau_2020}
S.~Hubmer and R.~Ramlau.
\newblock A frame decomposition of the atmospheric tomography operator.
\newblock {\em Inverse Problems}, 36(9):094001, 2020.

\bibitem{Hubmer_Ramlau_2021_01}
S.~Hubmer and R.~Ramlau.
\newblock {Frame Decompositions of Bounded Linear Operators in Hilbert Spaces
  with Applications in Tomography}.
\newblock {\em Inverse Problems}, 37(5):055001, 2021.

\bibitem{Radon1specs}
Johann~Radon Institute.
\newblock {High Performance Computing}.
\newblock \url{https://www.ricam.oeaw.ac.at/hpc/}, 2008.
\newblock [Online; accessed 21-June-2021].

\bibitem{Kudryavtsev_Shestakov_2019}
A.~A. Kudryavtsev and O.~V. Shestakov.
\newblock {Estimation of the Loss Function When Using Wavelet-Vaguelette
  Decomposition for Solving Ill-Posed Problems}.
\newblock {\em Journal of Mathematical Sciences}, 237:804--809, 2019.

\bibitem{Lee_1997}
N.~Lee.
\newblock {\em Wavelet-vaguelette decompositions and homogeneous equations}.
\newblock ProQuest LLC, Ann Arbor, MI, 1997.
\newblock Thesis (Ph.D.)--Purdue University.

\bibitem{Louis_1989}
A.~K. Louis.
\newblock {\em {I}nverse und schlecht gestellte {P}robleme}.
\newblock Teubner Studienb{\"u}cher Mathematik. Vieweg+Teubner Verlag, 1989.

\bibitem{Natterer_2001}
F.~{Natterer}.
\newblock {\em {The Mathematics of Computerized Tomography}}.
\newblock Society for Industrial and Applied Mathematics, Philadelphia, PA,
  2001.

\bibitem{Ramlau_Koutschan_Hofmann_2020}
R.~Ramlau, C.~Koutschan, and B.~Hofmann.
\newblock {On the Singular Value Decomposition of n-Fold Integration
  Operators}.
\newblock In J.~Cheng, S.~Lu, , and M.~Yamamoto, editors, {\em Inverse Problems
  and Related Topics}, pages 237--256. Springer Singapore, 2020.

\bibitem{Wang_Bovik_Sheikh_Simoncelli_2004}
Z.~Wang, A.C. Bovik, H.~R. Sheikh, and E.~P. Simoncelli.
\newblock Image quality assessment: from error visibility to structural
  similarity.
\newblock {\em IEEE Transactions on Image Processing}, 13(4):600--612, 2004.

\bibitem{Weissinger_2021}
L.~Weissinger.
\newblock {Realization of the Frame Decomposition of the Atmospheric Tomography
  Operator}.
\newblock Master's thesis, JKU Linz, 2021.

\end{thebibliography}
}

\section*{Appendix: Minor Erratum}

\vspace{5pt}

\noindent
In this appendix, we want to correct two minor errors in our previous publication \cite{Hubmer_Ramlau_2021_01}. 

First of all, it was stated that the stability condition \eqref{cond_A_stability} implies that $A$ is continuously invertible as an operator from $X \to Z$. This is clearly wrong, since in general $R(A) \subsetneq Z$. However, it is true that condition \eqref{cond_A_stability} implies that for each $y \in R(A)$ there exists a unique solution $x = A^{-1} y \in X$ and that in this case $\norm{A^{-1}y}_X \leq (1/c_1)\norm{y}_Z$. Fortunately, this only marginally changes the results of the paper, such that in Theorem~4.3 and Theorem~4.8 
it should read $ y \in R(A)$ instead of $y \in Z$. Consequently, in Theorem~5.1, 5.2, 5.3 it should then also read $y \in R(A)$ instead of $y \in H^{\alpha+1/2}(\OS)$. 

Secondly, the coefficients in the FDs of the Radon transform given in Theorem~5.2 and Remark~5.2 are incorrect. However, the proper coefficients can be derived from Theorem~5.1 and are given correctly in this paper (see Theorem~\ref{thm_Radon_wavelets} and Theorem~\ref{thm_Radon_exp}).

\end{document}